\documentclass[11pt]{article}

\usepackage[a4paper,margin=30mm]{geometry}
\usepackage{amsmath,amssymb,amsthm,mathtools}
\usepackage[hidelinks,bookmarksdepth=2]{hyperref}
\newtheorem{theorem}{Theorem}[section]
\newtheorem{proposition}[theorem]{Proposition}
\newtheorem{lemma}[theorem]{Lemma}
\newtheorem{corollary}[theorem]{Corollary}
\theoremstyle{plain}
\newtheorem{definition}[theorem]{Definition}

\newtheorem{example}[theorem]{Example}
\theoremstyle{remark}
\newtheorem{remark}[theorem]{Remark}

\newcommand{\Rplus}{(0,\infty)}
\newcommand{\G}{G}
\newcommand{\C}{\mathcal C}
\newcommand{\1}{\mathbb I}
\newcommand{\E}{\mathbb E}
\newcommand{\Pp}{\mathbb P}
\newcommand{\Law}{\mathcal L}
\newcommand{\RV}{\mathrm{RV}}
\newcommand{\od}{\mathbin\odot}

\title{Multiradial Regular Variation}

\author{Enkelejd Hashorva\thanks{Department of Actuarial Science,
University of Lausanne, UNIL-Dorigny, 1015 Lausanne, Switzerland.
Email: \texttt{Enkelejd.Hashorva@unil.ch}}}

\date{September 20, 2026}

\begin{document}
\maketitle
   \begin{abstract}
   We introduce and study multiradial regular variation of random fields,
   allowing componentwise thresholds to diverge at unrelated rates. The
   limit measures are homogeneous in each component and finite on events
   where every component exceeds a positive level in absolute value
   somewhere on a compact window. We characterise convergence by anchor
   exceedance masses and conditional whole-path laws, together with a
   compact-window mass bound in the continuous-parameter setting. For
   fields indexed by a countable discrete abelian group or by \(\mathbb R^m\),
   \(m\ge1\),
   every shift-invariant tail measure in this class admits a strictly
   stationary realisation. Finite moving averages with shared volatility
   show that scalar row-tail measures and simultaneous-exceedance tail
   masses on every
   consecutive finite window can coincide while relative-lag tail masses
   differ.
\end{abstract}

\medskip
\noindent\textbf{Keywords:} Multiradial regular variation; random
fields; multihomogeneous tail measures; path-space convergence;
stationary random fields.

\section{Introduction}
\label{1}

Regular variation describes extreme configurations through a limiting
measure. For a random vector $Y$, we use the direct-threshold
formulation
\begin{equation}
 \lim_{n\to\infty}a_n\Pp\{n^{-1}Y\in A\}=\nu(A),
 \qquad \nu(cA)=c^{-\alpha}\nu(A),\quad c>0,
 \label{2}
\end{equation}
where $\alpha>0$, $(a_n)_{n\ge1}$ is a positive regularly varying sequence
of index $\alpha$, the measure $\nu$ is nonzero and finite on sets separated
from the origin, and $A$ ranges over such $\nu$-continuity sets
\cite{basrak2002characterization,Resnick2007}.
 
For independent
tail-balanced components with a common tail index, the ordinary
multivariate tail measure is concentrated on the coordinate axes.
Simultaneous large values occur on a smaller probability scale,
motivating different normalisations and localisation, as in hidden
regular variation \cite{LindskogResnickRoy2014}.

In this contribution we study the joint tail layer in which every component is large,
possibly at a different location, and the component thresholds diverge
at unrelated rates. The underlying action is componentwise positive
scaling.\\ 
 We have studied multihomogeneous measures and their stochastic polar
representations  in \cite{Hashorva2026Multihomogeneous,Hashorva2026Particles};
see \cite{EvansMolchanov2018,BladtHashorvaShevchenko2022,hashorva2026shift} for the scalar action. Related joint-tail
approximations with separately diverging thresholds appear in
\cite[Theorem~3.1]{KonstantinidesPassalidis2025}. Here we formulate
joint-tail asymptotics as convergence of measures on path space,
with a common limit for all diverging component thresholds.

Our localisation builds on measure convergence on general spaces
\cite{deHaanLin2001,HultLindskog2006Skorokhod,hult:lindskog:2006},
tail and spectral processes
\cite{BasrakSegers2009,meinguet2010regularly,segers2017polar,DombryHashorvaSoulier2018,Hrovje,mikosch2024extreme}
and compact-window formulations
\cite{Soulier2022,BladtHashorvaShevchenko2022}. Following the boundedness
approach of \cite{basrak2019note,KulikSoulier2020}, we specify the sets
on which measures must be finite. A general topological framework for
$q=1$ is developed in \cite{BasrakMilincevicMolchanov2025}.\\ 
The boundedness $\mathcal B_\Pi$, defined in \eqref{182}, requires
each component to exceed a positive level in absolute value somewhere
on a compact window. These locations may differ across components.

For a $q$-dimensional random field (rf) $X=(X_1,\ldots,X_q)$ observed at
$t_1,\ldots,t_k$, the underlying object is a random matrix: rows
correspond to components and columns to observation locations.
Each row has its own threshold:
\[
 \mathbf X=(X_i(t_j))_{1\le i\le q,\,1\le j\le k},
 \qquad
 u^{-1}\od\mathbf X
 =\operatorname{diag}(u_1^{-1},\ldots,u_q^{-1})\mathbf X.
\]
The relevant sets keep every row norm away from zero, although
individual entries may vanish and different rows may be visible in
different columns. We work with finite or countable discrete index
sets and with continuous paths on Euclidean parameter spaces, equipped
with locally uniform convergence.

For example, requiring each component to exceed its level in absolute
value somewhere in a window differs from requiring a joint exceedance
at one common location. The former allows displacement between
component extremes; the latter requires their overlap. Both are
localised joint-tail events, but their limiting masses need not agree.
A path-space limit retains this distinction together with the shapes
of the extreme configurations, whereas same-location summaries do not
in general record it.

We call the resulting notion \emph{multiradial regular variation}
(MrRV). Its defining requirement is a common nonzero limit measure
$\nu$ whenever every component threshold tends to infinity, with no
restriction on their relative rates, namely 
\begin{equation}
 \left(\prod_{i=1}^q b_i(u_i)\right)
 \Pp\{u^{-1}\od X\in\cdot\}
 \xrightarrow{v_\Pi}\nu,
 \qquad u\to\infty,
 \quad b_i\in\RV_{\alpha_i},\quad\alpha_i>0,\quad 1\le i\le q.
 \label{3}
\end{equation}
Here $\RV_\rho$ denotes regular variation at infinity with index
$\rho$, the $b_i$'s are positive, $u^{-1}\od X$ denotes componentwise division, and
$u\to\infty$ means $\min_{1\leq i\leq q}u_i\to\infty$. The notation
$v_\Pi$ denotes convergence under the product localisation specified
in \eqref{182} and \eqref{29}. Thus \eqref{3} is required along every
diverging threshold sequence, not only along a fixed scaling curve.
This is a substantive restriction: different mechanisms may contribute
when thresholds are comparable and when one grows much faster than
the others. Example~\ref{40} exhibits a common shock whose joint
contribution depends on relative threshold growth even though the
conditional scalar path limits do not depend on the realised shock.
Thus conditional scalar path limits alone do not imply MrRV.

The product normalisation forces the limit measure $\nu$ to be homogeneous in each
component separately, namely 
\begin{equation}
 \nu(r\od A)=\left(\prod_{i=1}^q r_i^{-\alpha_i}\right)\nu(A),
 \qquad r\in(0,\infty)^q
 \label{4}
\end{equation}
for every Borel set $A$.\\ 
 A nonzero multihomogeneous measure is a
\emph{multihomogeneous tail measure} if it satisfies the carrier
condition \eqref{12a} and the product-window finiteness condition
\eqref{12b} (Definition~\ref{11}); we abbreviate this to
\emph{tail measure}. The functions $b_i$ normalise the joint tail
layer and need not be marginal tail normalisers. The case $q=1$
recovers scalar regular variation. Independent regularly varying
component fields give tensor-product tail measures. More generally,
separate homogeneity does not imply independence: the stochastic
representer can retain dependence between component shapes and their
relative locations. The multi-component framework of
\cite{Segers2020} instead uses a common scalar action.\\

General scalar actions, including unequal coordinate scalings, are treated in \cite[Section~3.1 and Example~3.1]{LindskogResnickRoy2014}. In that framework the action is indexed by a single positive parameter. Here we consider the full product action of \((0,\infty)^q\) and require a common limiting measure as all component thresholds diverge, without restrictions on their relative rates.\\

Every tail measure admits a stochastic polar representation and a
Pareto realisation with local total-variation convergence
(Proposition~\ref{38}). The main convergence criterion then identifies
which random fields have a prescribed target (Theorem~\ref{44}).
An anchor is a tuple of locations, one for each component. The
criterion combines the normalised probability of a joint exceedance
at that tuple with the conditional law of the entire rescaled field.
At zero-mass anchors only the vanishing of the normalised exceedance
probability is required.\\ 
 In the continuous-parameter setting, fixed
anchors can miss narrow peaks; Example~\ref{227} shows that the
additional compact-window mass bound cannot be omitted even when the
conditional whole-path limits hold.\\

When $\mathsf T$ is an additive group, write
$(B^hf)(t)=f(t-h)$ and call $\nu$ shift-invariant when
\begin{equation}
 \nu\circ(B^h)^{-1}=\nu,
 \qquad h\in\mathsf T.
 \label{13}
\end{equation}
Stationarity of the rf $X$ reduces absolute anchor tuples to relative lags
(Corollary~\ref{50}) but does not in general reduce them to a single
common location.\\ 

 A separate question is whether a shift-invariant
target can itself be realised by a stationary rf. Theorem~\ref{53}
gives a strictly stationary realisation of every shift-invariant tail
measure when the index set is a countable discrete abelian group or
$\mathbb R^m$, $m\ge1$.

Finite moving averages with shared volatility have explicit
joint path-tail measures and relative-anchor masses
(Example~\ref{88}). For every filter length at least four, two such
models have the same one-time volatility distribution, scalar row-tail
measures and simultaneous-exceedance tail masses on every consecutive
finite window, but different tail masses at a nonzero relative lag
(Example~\ref{97}). Thus these scalar and common-time window summaries
do not determine the joint path-tail measure.\\

A componentwise homogeneous mapping lemma gives fixed-window tail
probabilities (Lemma~\ref{104}). Example~\ref{117} also records a
limitation on random multiplication: an independent unbounded
multiplier with finite moments of all positive orders can destroy
MrRV at the original rates under sufficiently unbalanced thresholds.\\

The present paper develops the fixed-window theory. A forthcoming
contribution   investigates growing-window limits through cluster
tail measures on quotient spaces under shifts, including anticlustering
and dependence conditions for Poisson point process convergence and
extremal indices; the one-scale results of
\cite[Sections~2--3]{BojanPhilippe} provide a benchmark.
These questions require additional assumptions and are outside the
scope of this contribution.

Organisation of the rest of this contribution: 
Section~\ref{6} introduces the path space, boundedness and local
random fields. Section~\ref{224} develops multiradial convergence
and its verification criteria, followed by homogeneous mappings.
Section~\ref{225} gives the stationary realisation theorem, and
Section~\ref{61} treats finite observations and finite moving
averages. Proofs are collected in Section~\ref{101}; localisation,
the stationary construction and example calculations are detailed
in Appendix~\ref{129}.

\section{Notation and preliminaries}
\label{6}
We shall introduce our basic notation, the path space and boundedness followed by the definitions of tail measures and local rfs.

\subsection{Path space and boundedness}

Let $q\in\mathbb N=\{1,2,\ldots\}$ and let $\mathsf T$ be a
nonempty finite or countably infinite discrete set, or
$\mathsf T=\mathbb R^m$ with $m\in\mathbb N$. We use the \emph{path space}

\[
   \C=C(\mathsf T,\mathbb R^q)
\]
of continuous functions $f: \mathsf T\to \mathbb R^q$ 
equipped with the topology of locally uniform convergence and its Borel
$\sigma$-field.
Write below $\1_A$ for the indicator of $A$, $\mu(F)=\int F\,d\mu$,
$\1_A\mu$ for the restriction of $\mu$ to $A$, and set 
$\E\{Y;A\}=\E[Y\1_A]$.
Denote further by  $\Law(U)$  the distribution of a random element $U$
and write  $T_\#\mu=\mu\circ T^{-1}$ for the \emph{pushforward} of a
measure $\mu$ under a measurable map $T$.\\ 
For finite Borel measures on a metric space \(E\) the notation
\[\mu_n\Longrightarrow\mu\quad\text{as }n\to\infty\] 
 means
\(\lim_{n\to\infty}\int_E F\,d\mu_n=\int_E F\,d\mu\)
for every bounded continuous \(F:E\to\mathbb R\).
This includes convergence of total masses; for probability laws
it is convergence in distribution. The same convention applies to
threshold-indexed measures as \(u\to\infty\).\\ 
Following the compact-window framework of
\cite[Definition~3.9 and Theorem~4.11]{BladtHashorvaShevchenko2022},
we fix an increasing exhaustion
\((K_n)_{n\geq1}\) of \(\mathsf T\) by nonempty compact sets such that
every compact subset lies in some \(K_n\). When $\mathsf T=\mathbb R^m$, we take
\(K_n=[-n,n]^m\) and throughout we use the \emph{local gauges}
\[
   \rho_{n,i}(f)=\sup_{t\in K_n}|f_i(t)|,\qquad f=(f_1, \ldots, f_q)\in \C, \qquad n\in\mathbb N,\quad 1\le i\le q.
\]
For $\alpha=(\alpha_1,\ldots,\alpha_q)\in(0,\infty)^q$, put
$\G=(0,\infty)^q$ and
\[
 \vartheta_\alpha(dr)=
 \prod_{i=1}^q\alpha_i r_i^{-\alpha_i-1}\,dr_i, \qquad
 \chi_\alpha(r)=\prod_{i=1}^q r_i^{\alpha_i},\qquad
 r=(r_1,\ldots,r_q)\in\G.
\]
We extend $\chi_\alpha$ to $[0,\infty)^q$ by the same product,
and define the \emph{componentwise action} by
\[
   (r\od f)(t)=(r_1f_1(t),\ldots,r_qf_q(t)).
\]

The \emph{deleted set}, \emph{carrier} and \emph{product windows} are
\begin{align}
 D_\Pi&=\bigcup_{i=1}^q\{f\in\C:f_i\equiv0\},
 &\C_\Pi^\circ&=\C\setminus D_\Pi,
 \notag\\
 U_{\boldsymbol n,\varepsilon}
 &=\bigcap_{i=1}^q
   \{f\in\C:\rho_{n_i,i}(f)>\varepsilon_i\},
 &\boldsymbol n&\in\mathbb N^q,
 \quad\varepsilon\in(0,\infty)^q.
 \label{9}
\end{align}
Define the \emph{localisation moduli} and associated open sets by
\begin{equation}
 \tau_n(f)=\min_{1\le i\le q}\rho_{n,i}(f),\qquad
 O_n=\{f\in\C:\tau_n(f)>1/n\},\qquad n\in\mathbb N
 \label{27}
\end{equation}
and hence with this notation 
\begin{equation}
 \C_\Pi^\circ=\bigcup_{n=1}^\infty O_n.
 \label{10}
\end{equation}

We use the distance-from-a-deleted-set formulation of boundedness;
see \cite[Section~2]{LindskogResnickRoy2014} and
\cite[Examples~B.1.6 and B.1.8--B.1.9]{KulikSoulier2020}.
Choose a metric \(d\) on \(\C\) which induces the locally uniform
topology and is invariant under addition of the same path:
\[
 d(f+h,g+h)=d(f,g),\qquad f,g,h\in\C.
\]
This invariance concerns addition of paths independently of
translations of the index set. One such metric is
\begin{equation}
 d_0(f,g)=\sum_{n=1}^\infty 2^{-n}
       \left(1\wedge\max_{1\le i\le q}\rho_{n,i}(f-g)\right).
 \label{190}
\end{equation}
The set \(D_\Pi\) is closed. For a Borel set $A\subset\C_\Pi^\circ$, put
\[
 \delta_d(f)=\operatorname{dist}_d(f,D_\Pi),\qquad f\in \C,\qquad
 \operatorname{dist}_d(A,D_\Pi)=\inf_{f\in A}\delta_d(f), 
\]
with \(\inf\varnothing=\infty\), and define
\begin{equation}
 \mathcal B_\Pi=\left\{A\subset\C_\Pi^\circ:
       A\text{ is Borel and }
       \operatorname{dist}_d(A,D_\Pi)>0\right\}.
 \label{182}
\end{equation}
This collection is independent of the chosen compatible metric
invariant under addition. For Borel \(A\subset\C_\Pi^\circ\) we have 
\begin{align}
 A\in\mathcal B_\Pi
 &\quad\Longleftrightarrow\quad
 A\subset U_{\boldsymbol n,\varepsilon}
 \quad\text{for some }\boldsymbol n\in\mathbb N^q,
       \ \varepsilon\in\G
 \label{192}\\
 &\quad\Longleftrightarrow\quad
 \inf_{f\in A}\tau_n(f)>0\quad\text{for some }n\in\mathbb N.
 \label{28}
   \end{align}
Thus boundedness requires each component to exceed a fixed positive
level somewhere on a fixed compact window. The witnessing location
may depend on the component and the path; no upper bound on values
is imposed. The sets \(O_n\) increase, eventually contain every
\(A\in\mathcal B_\Pi\), and satisfy
\(\overline O_n\subset O_{n+1}\).

We call members of \(\mathcal B_\Pi\) \emph{bounded} and a Borel
measure on \(\C_\Pi^\circ\) \emph{boundedly finite} if it is finite
on every such set, equivalently on every product window \eqref{9}.
Componentwise scaling preserves this boundedness, namely 
\begin{equation}
 A\in\mathcal B_\Pi\quad\Longleftrightarrow\quad
 r\od A\in\mathcal B_\Pi,\qquad r\in\G.
 \label{131}
\end{equation}
The metric independence and localisation properties are proved in
Appendix~\ref{129}.

\subsection{Tail measures and local rfs}
Below we set  $\boldsymbol1=(1,\ldots,1)\in \mathbb R^q$.
\begin{definition}
\label{11}
A nonzero Borel measure \(\nu\) on \(\C\) is an
\emph{$\alpha$-multihomogeneous tail measure} if
\begin{subequations}
\label{12}
\begin{align}
 \nu(D_\Pi)&=0,
 \label{12a}\\
  \nu(U_{\boldsymbol n,\boldsymbol1})&<\infty,
 \qquad \boldsymbol n\in\mathbb N^q,
 \label{12b}\\
 \nu(r\od A)&=\chi_\alpha(r)^{-1}\nu(A),
 \qquad r\in\G,\quad A\subset\C\text{ Borel}.
 \label{12c}
\end{align}
\end{subequations}

It is \emph{shift-invariant} if \eqref{13} also holds.
\end{definition}
 By \eqref{12c}
\[
 \nu(U_{\boldsymbol n,\varepsilon})
 =\chi_\alpha(\varepsilon)^{-1}
  \nu(U_{\boldsymbol n,\boldsymbol1}),
 \qquad \varepsilon\in\G,
\]
so \eqref{12b} ensures finiteness on every product window. Hence in view of \eqref{10} 
every tail measure is \(\sigma\)-finite.  \\ 
The term \emph{multihomogeneous measure} refers to the scaling identity
\eqref{12c} alone; the tail-measure class also requires the carrier
condition \eqref{12a} and product-window finiteness \eqref{12b}. This is the path-space
specialisation of the structural terminology in
\cite{Hashorva2026Multihomogeneous,Hashorva2026Particles}.
The word \emph{product} in the action and localisation does not assert
factorisation of $\nu$; we reserve \emph{tensor-product tail measure}
for a measure of the form $\bigotimes_{i=1}^q\nu_i$.
Although \eqref{12c} implies
$(-\sum_{i=1}^q\alpha_i)$-homogeneity under diagonal scaling,
\eqref{12b} does not imply finiteness on ordinary origin-separated sets. For example, the positive tensor-product Pareto measure in
two dimensions assigns mass one to $\{x_1>1,x_2>1\}$ and infinite mass
to $\{x_1>1\}$.

 Throughout the paper we fix a
\(\C_\Pi^\circ\)-valued random element \(Z\) satisfying the compact
mixed-moment condition \eqref{18} and work with
\begin{equation}
 \nu:=\nu_Z,\qquad
 \nu_Z(A)=\E\left\{\int_\G\1_{\{r\od Z\in A\}}\,
              \vartheta_\alpha(dr)\right\}.
 \label{14}
\end{equation}
An \emph{anchor tuple} is a tuple
\(\boldsymbol h=(h_1,\ldots,h_q)\in\mathsf T^q\), with one
observation location for each component. Its \emph{anchor event} and
\emph{anchor mass} are
\begin{equation}
 A_{\boldsymbol h}
 =\{f\in\C:|f_i(h_i)|>1,\ 1\leq i\leq q\},
 \qquad
 p_{\boldsymbol h}=\nu(A_{\boldsymbol h}).
 \label{15}
\end{equation}
The anchor mass satisfies \(0\le p_{\boldsymbol h}<\infty\) by
\eqref{12b} and it may be zero. When \(p_{\boldsymbol h}>0\)  the
\emph{local tail rf} \(Y^{[\boldsymbol h]}\) and its \emph{spectral rf}
\(\Theta^{[\boldsymbol h]}\) are defined by
\begin{equation}
 \Pp\{Y^{[\boldsymbol h]}\in A\}
 =p_{\boldsymbol h}^{-1}\nu(A\cap A_{\boldsymbol h}),
 \qquad
 \Theta_i^{[\boldsymbol h]}(t)
 =\frac{Y_i^{[\boldsymbol h]}(t)}
 {|Y_i^{[\boldsymbol h]}(h_i)|}.
 \label{16}
\end{equation}

The elementary construction from countably many evaluation sectors in
Proposition~\ref{38} represents every measure in
Definition~\ref{11} in the form
\eqref{14} so that the standing convention
\(\nu=\nu_Z\) entails no loss of generality within this class; 
the random path $Z$ is called a \emph{stochastic representer} of $\nu$. Direct radial integration yields for all $\varepsilon\in (0,\infty)^q$
\begin{equation}
 \nu(U_{\boldsymbol n,\varepsilon})
 =\E\left\{\prod_{i=1}^q
 \left(\frac{\rho_{n_i,i}(Z)}{\varepsilon_i}\right)^{\alpha_i}\right\}
 \label{17}
\end{equation}
and therefore  product-window finiteness \eqref{12b} is equivalent to
\begin{equation}
 \E\left\{\prod_{i=1}^q\rho_{n_i,i}(Z)^{\alpha_i}\right\}<\infty,
 \qquad \boldsymbol n\in\mathbb N^q.
 \label{18}
\end{equation}

The \emph{visible sector} at \(\boldsymbol h\) 
denoted by $E_{\boldsymbol h}=\{f \in\C:|f_i(h_i)|>0,\ 1\leq i\leq q\}$ consists of paths
whose components are nonzero at their respective anchor locations. Writing next 
\begin{equation} 
 w_{\boldsymbol h}(f)=\prod_{i=1}^q|f_i(h_i)|^{\alpha_i},\qquad
 (N_{\boldsymbol h}f)_i=\frac{f_i}{|f_i(h_i)|},
 \quad f\in E_{\boldsymbol h},\quad 1\le i\le q 
\label{183}
\end{equation}
and substituting \(s_i=r_i|Z_i(h_i)|\), \(1\le i\le q\), in \eqref{14} for every
nonnegative Borel \(F\) we obtain
\[
 \nu(F\1_{E_{\boldsymbol h}})
 =\E\left\{w_{\boldsymbol h}(Z)
   \int_\G F(s\od N_{\boldsymbol h}Z)\,
      \vartheta_\alpha(ds);\ Z\in E_{\boldsymbol h}\right\}.
\]
Restricting the radial integral to \((1,\infty)^q\) yields
\begin{align}
 p_{\boldsymbol h}&=\E\{w_{\boldsymbol h}(Z)\}<\infty,
 \label{19}\\
 p_{\boldsymbol h}\E\{F(\Theta^{[\boldsymbol h]})\}
 &=\E\bigl\{w_{\boldsymbol h}(Z)
 F(N_{\boldsymbol h}Z);\ Z\in E_{\boldsymbol h}\bigr\},
 \qquad F\geq0,
 \label{20}
\end{align}
where all functionals are Borel and the second identity requires
\(p_{\boldsymbol h}>0\). We define the integrand in \eqref{20} to be
zero outside \(E_{\boldsymbol h}\) before performing any division. At
an anchor of positive mass we also have
\begin{equation*}
 Y^{[\boldsymbol h]}\stackrel d=R\od\Theta^{[\boldsymbol h]},
\end{equation*}
where \(R=(R_1,\ldots,R_q)\) is independent of
\(\Theta^{[\boldsymbol h]}\) and has mutually independent coordinates
satisfying
\begin{equation}
 \Pp\{R_i>x\}=x^{-\alpha_i},\qquad x\geq1,\quad1\leq i\leq q.
 \label{22}
\end{equation}
The unrestricted integral and the spectral tilt similarly yield
\begin{equation}
 \nu(A\cap E_{\boldsymbol h})
 =p_{\boldsymbol h}\E\left\{\int_\G
 \1_{\{r\od\Theta^{[\boldsymbol h]}\in A\}}
 \vartheta_\alpha(dr)\right\}.
 \label{23}
\end{equation}
Since
\(E_{\boldsymbol h}=\bigcup_{n=1}^{\infty}
 ((1/n,\ldots,1/n)\od A_{\boldsymbol h})\), homogeneity implies
\(\nu(E_{\boldsymbol h})=0\) when \(p_{\boldsymbol h}=0\), without
introducing a local probability law. When \(p_{\boldsymbol h}>0\),
\eqref{23} implies \(\nu(E_{\boldsymbol h})=\infty\); the finite mass
used in \eqref{16} is \(p_{\boldsymbol h}=\nu(A_{\boldsymbol h})\).

Fix now the countable dense set
\begin{equation}
 \mathsf D=
 \begin{cases}
 \mathsf T,&\mathsf T\text{ is discrete},\\
 \mathbb Q^m,&\mathsf T=\mathbb R^m,
 \end{cases}
 \qquad
 \C_\Pi^\circ=\bigcup_{\boldsymbol h\in\mathsf D^q}E_{\boldsymbol h}.
 \label{184}
\end{equation}
Each component of \(f\in\C_\Pi^\circ\) is continuous and not
identically zero, hence it is nonzero at some point of the dense
set \(\mathsf D\). Choosing these points componentwise gives
\(f\in E_{\boldsymbol h}\) for some
\(\boldsymbol h\in\mathsf D^q\), proving \eqref{184}.
Thus the anchor restrictions
\(\{\1_{A_{\boldsymbol h}}\nu:\boldsymbol h\in\mathsf D^q\}\),
equivalently their masses and positive-mass spectral laws, determine
\(\nu\) by \eqref{23} and the countable cover \eqref{184}. Direct integration in \eqref{14} also yields for all $n\in\mathbb N$
\begin{equation}
 \nu\{f\in\C:|f_i(t)|=c\}
 =\nu\{f\in\C:\rho_{n,i}(f)=c\}=0,
 \qquad c>0,\quad t\in\mathsf T,\quad1\leq i\leq q.
 \label{25}
\end{equation}

\section{Main Results}
\label{224}
We first define MrRV and give a Pareto realisation of every tail
measure, followed by convergence criteria and a homogeneous mapping
principle. Finally, we construct a strictly stationary realisation of
every shift-invariant tail measure when the index set is a countable
discrete abelian group or $\mathbb R^m$.

\subsection{Definition and Pareto realisation}

We use \(\C_\Pi^\circ\) as the convergence carrier   and identify each measure on this carrier with its zero extension
to \(\C\).

Let \(C_\Pi(\C)\) denote the bounded continuous functions
\(F:\C\to\mathbb R\) for which there are
\(\boldsymbol n\in\mathbb N^q\) and \(\varepsilon\in\G\) such that
\begin{equation}
   F(f)=0
   \quad\text{if }\rho_{n_i,i}(f)\leq\varepsilon_i
   \text{ for at least one }i\in\{1,\ldots,q\}, \qquad f\in \C.
   \label{29}
\end{equation}

When \(\mathsf T\) is an additive group, each shift \(B^h\) is a
homeomorphism of \(\C\). Since every compact translate \(K_n\pm h\)
lies in some \(K_m\), \eqref{192} and \eqref{29} imply
\[
 A\in\mathcal B_\Pi\quad\Longleftrightarrow\quad B^hA\in\mathcal B_\Pi,
 \qquad
 F\in C_\Pi(\C)\quad\Longleftrightarrow\quad F\circ B^h\in C_\Pi(\C).
\]

For \(u=(u_1,\ldots,u_q)\in\G\), write
\begin{equation}
   u\to\infty
   \quad\Longleftrightarrow\quad
   \min_{1\leq i\leq q} u_i\to\infty.
   \label{30}
\end{equation}
In the sequel all threshold limits are understood along every sequence
\(u^{(k)}\) with \(\min_{1\leq i\leq q}u_i^{(k)}\to\infty\)
as \(k\to\infty\), equivalently as
limits with all component thresholds sufficiently large.
In particular, we define
\[
 \limsup_{u\to\infty}g(u)
 :=\inf_{M>0}\ \sup_{\substack{u\in\G\\ \min_{1\leq i\leq q}u_i\ge M}}g(u).
\]
For a family \((\zeta_u)_{u\in\G}\) of boundedly finite measures and a
boundedly finite measure \(\zeta\), write
\(\zeta_u\xrightarrow{v_\Pi}\zeta\), as \(u\to\infty\), if
\[
   \lim_{u\to\infty}\int F\,d\zeta_u=\int F\,d\zeta,
   \qquad F\in C_\Pi(\C).
\]

Let $a(u)=\prod_{i=1}^q b_i(u_i)$ where
\[
   b(u)=(b_1(u_1),\ldots,b_q(u_q)),
   \qquad b_i:\Rplus\to\Rplus,\qquad
   b_i\in\RV_{\alpha_i},\qquad 1\le i\le q.
\] 
For a \(\C\)-valued rf   \(X\) define its normalised laws
on \(\C_\Pi^\circ\) by
\begin{equation}
   \mu_u^{X,b}(A)
   =a(u) 
   \Pp\{u^{-1}\od X\in A\},
   \qquad A\ \text{Borel in }\C_\Pi^\circ.
   \label{31}
\end{equation}
\begin{definition}
\label{32}
The rf \(X\) is MrRV with tail measure \(\nu\) of index
\(\alpha\) and scaling \(b\),   if \(\nu\) is
nonzero and boundedly finite and
\begin{equation}
   \mu_u^{X,b}\xrightarrow{v_\Pi}\nu,
   \qquad u\to\infty.
   \label{33}
\end{equation}
\end{definition}

We shall abbreviate \eqref{33} as
\[
   X\in\RV_\Pi(\alpha,\nu;b).
\]

Applying \eqref{33} after a fixed componentwise rescaling and using
regular variation of the $b_i$ together with \eqref{131} gives the
homogeneity identity \eqref{12c}. Thus every limit in
Definition~\ref{32}, extended by zero to $D_\Pi$, is a tail measure in
Definition~\ref{11}. If $\mathsf T$
is an additive group and $X$ is strictly stationary, shifting the
normalised laws similarly establishes the shift-invariance property  \eqref{13}.

The \(b_i\) normalise the joint layer and need not be marginal tail
normalisers: events where other components are invisible lie outside
product localisation.

The componentwise multiplicative normalisation is not identifiable: if
\(\widetilde b_i=c_i b_i\) with \(c_i>0\) for \(1\le i\le q\), then the same rf $X$  has tail
measure \((\prod_{i=1}^q c_i)\nu\) under the scaling 
\(\widetilde b\). We do not require any of the anchor masses \eqref{15} to equal one. For a
stationary nonzero target with $q=1$, the origin mass is positive and
can be normalised to one. For $q\ge2$, a nonzero target may have
$p_{(0,\ldots,0)}=0$ because different components need not be visible
at the same location. For example, on \(\mathbb Z\) with \(q=2\),
the shift-invariant tail measure
\[
 \nu(F)=\sum_{s\in\mathbb Z}\int_\G
 F\bigl(r_1\1_{\{s\}},r_2\1_{\{s+1\}}\bigr)\,
 \vartheta_\alpha(dr),\qquad F\ge0,
\]
has \(p_{(0,0)}=0\) and \(p_{(0,1)}=1\). Only finitely many
centres \(s\) contribute to each product window defined in \eqref{9}, so \eqref{12b} holds.

For the canonical power tail rates, put
\begin{equation}
   b_\alpha(u)
   =(u_1^{\alpha_1},\ldots,u_q^{\alpha_q}).
   \label{37}
\end{equation}
When \(b=b_\alpha\), we shall write
\[
   \mu_u^X(A):=\mu_u^{X,b_\alpha}(A)
   =\chi_\alpha(u)\Pp\{u^{-1}\od X\in A\},
\]
and write \[X\in\RV_\Pi(\alpha,\nu)\]
 in place of
\(X\in\RV_\Pi(\alpha,\nu;b_\alpha)\).
This direct-level convention uses \(u_i\) as the actual threshold.
If \(e_i\in\RV_{1/\alpha_i}\) is an asymptotic inverse of \(b_i\) with
\(\lim_{s\to\infty}b_i(e_i(s))/s=1\) for \(1\le i\le q\), then the substitution \(u_i=e_i(s_i)\) recovers
the asymptotically equivalent quantile formulation with multiplier
\(\prod_{i=1}^q s_i\) and thresholds \(e_i(s_i)\), \(1\le i\le q\), as \(\min_{1\le i\le q}s_i\to\infty\).

For the realisation below let \(R\) be the Pareto vector of
\eqref{22}, independent of the representer \(Z\). For finite measures
\(\mu,\eta\) on a common measurable space \((E,\mathcal E)\), write
\[
 \|\mu-\eta\|_{\mathrm{TV}}
 :=\sup_{\substack{F:E\to[-1,1]\\ F\ \mathrm{measurable}}}
       \left|\int F\,d\mu-\int F\,d\eta\right|.
\]

\begin{proposition}
\label{38}
Every measure in Definition~\ref{11}
equals $\nu_Z$ for a $\C_\Pi^\circ$-valued random element $Z$
satisfying \eqref{18}. If $\nu$ is carried by a
Borel set invariant under componentwise positive scaling, $Z$ can be
chosen in that set almost surely. For every such representer we have
\begin{align}
 X&=R\od Z\in\RV_\Pi(\alpha,\nu_Z),
 \label{39}\\
 \lim_{u\to\infty}
 \left\|\1_A\mu_u^X-\1_A\nu\right\|_{\mathrm{TV}}&=0,
 \qquad A\in\mathcal B_\Pi.
 \notag
\end{align}
\end{proposition}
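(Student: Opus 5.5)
We prove the three assertions in turn: the representation, then convergence on bounded sets in total variation, which gives \eqref{39} at once.

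\emph{Representation.} Enumerate $\mathsf D^q=\{\boldsymbol h^{(1)},\boldsymbol h^{(2)},\ldots\}$ and form the disjoint evaluation sectors
\[
S_k=E_{\boldsymbol h^{(k)}}\setminus\bigcup_{j<k}E_{\boldsymbol h^{(j)}}.
\]
These sets are Borel. Each is invariant under the action $r\od\cdot$, since every $E_{\boldsymbol h}$ is. By \eqref{184} they partition $\C_\Pi^\circ$.

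On $S_k$ we use polar coordinates $f\mapsto(w,\theta)$ with $w_i=|f_i(h^{(k)}_i)|$ and $\theta=N_{\boldsymbol h^{(k)}}f$. Set
\[
\sigma_k(B)=\nu\bigl(\{f\in S_k\cap A_{\boldsymbol h^{(k)}}:N_{\boldsymbol h^{(k)}}f\in B\}\bigr).
\]
This is a finite measure with total mass at most $p_{\boldsymbol h^{(k)}}<\infty$. Homogeneity \eqref{12c}, applied to the rectangles $\{w\in r\od(1,\infty)^q\}$, shows that $\1_{S_k}\nu$ is the image of $\vartheta_\alpha\otimes\sigma_k$ under $(r,\theta)\mapsto r\od\theta$. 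To see this, note that both measures agree on the $\pi$-system generated by products of such rectangles with $\theta$-sets, and both are $\sigma$-finite.

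Choose weights $c_k>0$ with $\sum_k c_k=1$ and put $m_k=\sigma_k(\C)$. Define $Z$ as follows: with probability $c_k$ (over indices with $m_k>0$, after renormalising the weights), draw $\Theta\sim\sigma_k/m_k$ and set
\[
Z=(m_k/c_k)^{1/\alpha}\od\Theta,
\]
understood componentwise, for example with the whole factor placed on the first coordinate. The rescaling rule $\nu_{s\od Z}=\chi_\alpha(s)\nu_Z$ then gives
\[
\nu_Z=\sum_k c_k\cdot\frac{m_k}{c_k}\cdot\frac{T_\#(\vartheta_\alpha\otimes\sigma_k)}{m_k}=\nu.
\]

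Condition \eqref{18} now follows from \eqref{17} and \eqref{12b}. Suppose $\nu$ is carried by a scaling-invariant Borel set $G'$. Replacing $S_k$ by $S_k\cap G'$ leaves $\nu$ unchanged, and then $Z\in G'$ almost surely.

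\emph{Total-variation convergence.} For any representer $Z$, condition on $Z=z$ with $z\in\C_\Pi^\circ$. The law of $u^{-1}\od R\od z$ has density in $r=u^{-1}\od R$ with respect to Lebesgue measure on $\G$ equal to
\[
\chi_\alpha(u)^{-1}\prod_i\alpha_i r_i^{-\alpha_i-1}\1_{\{r_i>u_i^{-1}\}}.
\]
Hence
\[
\mu_u^X(F)=\E\int_{\G}F(r\od Z)\,\1_{\{r>u^{-1}\}}\,\vartheta_\alpha(dr).
\]
This is exactly $\nu_Z$ with the radial integral truncated to the set $\{r_i>1/u_i\ \forall i\}$. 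It follows that
\[
\|\1_A\mu_u^X-\1_A\nu\|_{\mathrm{TV}}\le \E\int \1_{\{r\od Z\in A\}}\,\1_{\{\exists i:\,r_i\le 1/u_i\}}\,\vartheta_\alpha(dr).
\]

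By \eqref{192} we have $A\subset U_{\boldsymbol n,\varepsilon}$, so the integrand requires $r_i>\varepsilon_i/\rho_{n_i,i}(Z)$ for every $i$. The bound is therefore at most
\[
\sum_i\E\Bigl\{\prod_{j}\bigl(\rho_{n_j,j}(Z)/\varepsilon_j\bigr)^{\alpha_j};\ \rho_{n_i,i}(Z)>\varepsilon_i u_i\Bigr\}.
\]
Each term tends to $0$ by dominated convergence, using \eqref{18} and the fact that $\rho<\infty$. Convergence in total variation on bounded sets implies $\mu_u^X(F)\to\nu(F)$ for $F\in C_\Pi(\C)$, because such $F$ are bounded and supported in some $U_{\boldsymbol n,\varepsilon}$. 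This gives \eqref{39}.

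\emph{Main obstacle.} The delicate step is the disintegration identity on the sectors $S_k$: showing that $\1_{S_k}\nu$ is determined by the finite measure $\sigma_k$. This requires checking the $\pi$-system/$\sigma$-finiteness argument carefully, in particular that the polar map $f\mapsto(w,\theta)$ is a Borel bijection of $S_k$ onto $\G\times N(S_k)$.
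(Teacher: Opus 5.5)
Your proposal is correct and follows the paper's proof essentially step for step. It uses the same disjointified anchor sectors, the same polar map $f\mapsto\bigl((|f_i(h_i)|)_{i\le q},N_{\boldsymbol h}f\bigr)$ identifying $\1_{S_k}\nu$ with the image of $\vartheta_\alpha\otimes\sigma_k$, the same mixture with the rescaling placed on the first coordinate (so that $\chi_\alpha$ of the factor is exactly $m_k/c_k$), and the same truncated radial integral for $R\od Z$. The only difference is minor: you bound the total-variation gap $\nu(A)-\mu_u^X(A)$ explicitly by a union bound and dominated convergence, which handles the full threshold net directly, whereas the paper uses monotonicity of the truncated measures in $u$ together with domination along diagonal thresholds.
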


Separate radial coordinates and mixed compact moments extend the
one-scale representation and Pareto construction in
\cite[Lemma~3.10 and Example~4.17]{BladtHashorvaShevchenko2022}; see also
\cite[Theorem~2.4]{DombryHashorvaSoulier2018}. Exact-Pareto truncation
gives local total-variation convergence (proof in Section~\ref{101}).
Theorem~\ref{53} supplies a stationary realisation for every
shift-invariant target when the index set is a countable discrete
abelian group or $\mathbb R^m$.

\begin{example}
\label{40}
Take \(q=2\) and \(\alpha_1=\alpha_2=\alpha>0\) and let
\(R_1,R_2,S\) be independent standard Pareto variables with respective
indices \(\alpha,\alpha,2\alpha\). Set \(X_i=\max(R_i,S)\), \(i\in\{1,2\}\), and regard
these variables as constant paths. Conditional on \(S\) the components
are independent and their scalar limits as \(u\to\infty\), under \(b_i(u)=u^\alpha\), \(i\in\{1,2\}\), are
the usual Pareto measures which do not depend on \(S\). The product of
these conditional limits has unit mass on \(\{f_1>1,f_2>1\}\) whereas
the threshold paths \(u=(t,t)\) and \(u=(t,t^2)\) give
\begin{align*}
 \lim_{t\to\infty}t^{2\alpha}\Pp\{X_1>t,X_2>t\}
 &=\lim_{t\to\infty}(2-t^{-2\alpha})=2,\\
 \lim_{t\to\infty}t^{3\alpha}\Pp\{X_1>t,X_2>t^2\}
 &=\lim_{t\to\infty}
 (1-t^{-2\alpha}+2t^{-\alpha}-t^{-3\alpha})=1.
\end{align*}
The normalised mass of \(\{f_1>1,f_2>1\}\) has no limit as \(u\to\infty\)
because relative threshold growth detects the additional common-shock
contribution. Thus conditional scalar path limits alone do not imply
MrRV.
The ordinary one-scale tail measure of $X$ is nevertheless the same as
that of \((R_1,R_2)\), since
\(\lim_{t\to\infty}t^\alpha\Pp\{S>\varepsilon t\}=0\)
for every \(\varepsilon>0\). Thus ordinary regular variation does
not determine the multiradial layer.
\end{example}

\subsection{Convergence criteria}

For the criterion below let \(X\) be a \(\C\)-valued rf and
retain the prescribed target measure \(\nu=\nu_Z\). Write
\(\mu_u=\mu_u^{X,b}\) and recall
\(O_n=\{\tau_n>1/n\}\), with \(\tau_n\) from \eqref{27}.
All limits in \(u\) are taken as \(u\to\infty\) in the sense of
\eqref{30}, without restricting the relative threshold rates. Use
\(\mathsf D\) from \eqref{184} and for \(\boldsymbol h\in\mathsf T^q\), \(x\in\G\)  define
\begin{equation}
 A_{\boldsymbol h,x}
 =\{f \in\C:|f_i(h_i)|>x_i,\ 1\leq i\leq q\}.
 \label{42}
\end{equation}
In this notation, we have $A_{\boldsymbol h,\boldsymbol1}=A_{\boldsymbol h}$, with $A_{\boldsymbol h}$ defined previously.
In view of \eqref{25} continuity of the anchor and compact-supremum maps implies
\begin{equation}
 \nu(\partial A_{\boldsymbol h,x})=\nu(\partial O_n)=0.
 \label{43}
\end{equation}

\begin{theorem}
\label{44}  $X\in\RV_\Pi(\alpha,\nu;b)$ if and only if as $u\to\infty$
\begin{equation}
 \1_{A_{\boldsymbol h}}\mu_u
 \Longrightarrow\1_{A_{\boldsymbol h}}\nu
 \quad\text{as finite measures on }\C,
 \qquad \boldsymbol h\in\mathsf D^q
 \label{45}
\end{equation}
and when \(\mathsf T=\mathbb R^m\)
\begin{equation}
 \limsup_{u\to\infty}\mu_u(O_n)\leq\nu(O_n),
 \qquad n\geq1.
 \label{46}
\end{equation}
In the discrete case \eqref{46} follows from \eqref{45}.
\end{theorem}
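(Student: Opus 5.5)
The plan is to prove both directions with a portmanteau argument on the bounded sets $O_n$. The anchor events $A_{\boldsymbol h,x}$ serve as a countable family of $\nu$-continuity sets that generate the open subsets of $O_n$.

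\emph{Necessity.} Assume $\mu_u\xrightarrow{v_\Pi}\nu$. I would first establish a portmanteau statement for $v_\Pi$-convergence: if $B\in\mathcal B_\Pi$ and $\nu(\partial B)=0$, then $\1_B\mu_u\Longrightarrow\1_B\nu$. The argument is standard. By \eqref{192}, $B\subset U_{\boldsymbol n,\varepsilon}$. Choose continuous cutoffs $\phi_k\in C_\Pi(\C)$ with $0\le\phi_k\le1$ that equal $1$ on $U_{\boldsymbol n,\varepsilon}$ and vanish unless $\rho_{n_i,i}>\varepsilon_i(1-1/k)$ for all $i$. For bounded continuous $G$, the products $G\phi_k$ lie in $C_\Pi(\C)$. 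Approximating $\1_B$ from inside and outside by such functions, and using $\nu(\partial B)=0$, gives the claim.

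Now $A_{\boldsymbol h}\subset U_{\boldsymbol n,\boldsymbol1}$ for $n_i$ with $h_i\in K_{n_i}$, so $A_{\boldsymbol h}\in\mathcal B_\Pi$, and $\nu(\partial A_{\boldsymbol h})=0$ by \eqref{43}. This gives \eqref{45}. Likewise $O_n\in\mathcal B_\Pi$ by \eqref{28} and $\nu(\partial O_n)=0$, so $\mu_u(O_n)\to\nu(O_n)$, which is stronger than \eqref{46}.

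\emph{Sufficiency, step 1.} I would extend \eqref{45} to all $A_{\boldsymbol h,x}$ with $x\in(\mathbb Q\cap(0,\infty))^q$. For such $x$,
\[
 \1_{A_{\boldsymbol h,x}}\mu_u^{X,b}(F)=\frac{a(u)}{a(x\od u)}\,\1_{A_{\boldsymbol h}}\mu^{X,b}_{x\od u}\bigl(F(x\od\cdot)\bigr).
\]
Since $x\od u\to\infty$ whenever $u\to\infty$, and $b_i(x_iu_i)/b_i(u_i)\to x_i^{\alpha_i}$ as $u_i\to\infty$, regardless of the relative rates, \eqref{45} together with \eqref{12c} gives $\1_{A_{\boldsymbol h,x}}\mu_u\Longrightarrow\1_{A_{\boldsymbol h,x}}\nu$.

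\emph{Step 2: finite unions.} Let $V=\bigcup_{k\le K}A_k$ with $A_k=A_{\boldsymbol h^k,x^k}$. Write $V$ as the disjoint union of the pieces $P_k=A_k\setminus\bigcup_{j<k}A_j$. Each $P_k$ is a subset of $A_k$ whose boundary lies in $\bigcup_j\partial A_j$, which is $\nu$-null by \eqref{43}. Weak convergence of $\1_{A_k}\mu_u$ then passes to $\1_{P_k}\mu_u$ by the ordinary portmanteau theorem. Summing over $k$ gives $\1_V\mu_u\Longrightarrow\1_V\nu$.

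\emph{Step 3: lower bound on open sets.} Fix $N$. Each $f\in O_N$ has every component nonzero somewhere. By continuity, some rational anchor $\boldsymbol h\in\mathsf D^q$ and rational $x$ satisfy $f\in A_{\boldsymbol h,x}$, and these sets are open. So every open $W\subset O_N$ is a countable increasing union of sets $W\cap V_j$, with $V_j$ finite unions as in Step 2. Step 2 gives $\liminf_u\mu_u(W\cap V_j)\ge\nu(W\cap V_j)$, and letting $j\to\infty$ yields $\liminf_u\mu_u(W)\ge\nu(W)$.

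\emph{Step 4: upper bound on total mass.} Here lies the main obstacle: controlling the total mass of $O_N$, which fixed anchors cannot see when peaks are narrow.
\begin{itemize}
\item In the continuous case, this is exactly \eqref{46}. Combined with the lower bound for the open set $O_N$, it gives $\mu_u(O_N)\to\nu(O_N)$.
\item In the discrete case, $K_N$ is compact in a discrete set, hence finite. Therefore $O_N=\bigcup_{\boldsymbol h\in K_N^q}A_{\boldsymbol h,(1/N)\boldsymbol1}$ is itself a finite union as in Step 2, so $\mu_u(O_N)\to\nu(O_N)$ follows from \eqref{45}. This also proves the final sentence of the theorem.
\end{itemize}

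\emph{Conclusion.} Convergence of the total masses, together with the open-set lower bound, yields $\1_{O_N}\mu_u\Longrightarrow\1_{O_N}\nu$ by portmanteau. Here one uses $\nu(\partial O_N)=0$ and handles closed sets relative to $O_N$ by complementation within $O_N$. Every $F\in C_\Pi(\C)$ vanishes outside some $U_{\boldsymbol n,\varepsilon}\subset O_N$ for $N$ large, so $\int F\,d\mu_u=\int F\1_{O_N}\,d\mu_u\to\int F\,d\nu$. Since $\nu$ is nonzero and boundedly finite by assumption, $X\in\RV_\Pi(\alpha,\nu;b)$.
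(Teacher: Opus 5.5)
Your proposal is correct and follows essentially the paper's proof: the scaling transfer \eqref{118}, disjointified finite unions of anchor events covering $O_n$, mass control on $O_n$ from \eqref{46} (or from finiteness of $K_n$ in the discrete case), and the localisation Lemma~\ref{133}. The only difference is in the last step: you conclude with the open-set portmanteau criterion (lower bounds on open subsets of $O_N$ plus convergence of $\mu_u(O_N)$), whereas the paper bounds $\limsup_{u\to\infty}\mu_u(O_n\setminus V_m)\le\nu(O_n)-\nu(V_m)$ and approximates $\1_{O_n}\mu_u$ directly by $\1_{V_m}\mu_u$.
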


Under \eqref{45}, the bound \eqref{46} is equivalent to
\(\lim_{u\to\infty}\mu_u(O_n)=\nu(O_n)\) for every \(n\ge1\). For an anchor with
\(p_{\boldsymbol h}=0\), \eqref{45} means just
\(\lim_{u\to\infty}\mu_u(A_{\boldsymbol h})=0\).  For
\(p_{\boldsymbol h}>0\), it is equivalent to the following
two limits as \(u\to\infty\):
\begin{align}
 \lim_{u\to\infty}a(u)\Pp\{|X_i(h_i)|>u_i,\ 1\leq i\leq q\}
 &=p_{\boldsymbol h},
 \label{47}\\
 \Law\left(u^{-1}\od X\ \middle|\
 |X_i(h_i)|>u_i,\ 1\leq i\leq q\right)
 &\Longrightarrow\Law(Y^{[\boldsymbol h]}),
 \label{48}
\end{align}
see  
\cite[Theorems~4.7, 4.15 and Remark~4.16(ii)]{BladtHashorvaShevchenko2022} for the case $q=1$.
Whole-path convergence in \eqref{48} includes conditional tightness;
\eqref{46} controls compact-window peaks missed by finitely many
anchor tuples. The proof is in Section~\ref{101}.
 
If \eqref{46} is imposed in either index setting, it suffices to
verify \eqref{45} at anchors with \(p_{\boldsymbol h}>0\).
Indeed, fix \(n\in\mathbb N\) and enumerate the cover of \(O_n\) by
\(A_{\boldsymbol h,(1/n,\ldots,1/n)}\),
\(\boldsymbol h\in(\mathsf D\cap K_n)^q\), as \((A_j)_{j\ge1}\), repeating sets if the cover is finite. Put
\(V_m^+=\bigcup_{1\le j\le m:\,\nu(A_j)>0}A_j\).
Scaling as in \eqref{118} and disjointifying give
\(\1_{V_m^+}\mu_u\Longrightarrow\1_{V_m^+}\nu\) as
\(u\to\infty\), for fixed \(m\), and
\(\nu(V_m^+)\uparrow\nu(O_n)\) as \(m\to\infty\).
Hence \eqref{46} yields
\[
 \limsup_{u\to\infty}\mu_u(O_n\setminus V_m^+)
 \le \nu(O_n)-\nu(V_m^+)\longrightarrow0,
 \qquad m\to\infty.
\]
Approximation yields convergence on \(O_n\) as \(u\to\infty\), and Lemma~\ref{133}
then implies \eqref{45} also at zero-mass anchors.

In continuous time, choose for every \(n\) increasing nonempty finite
sets \(G_{n,k}\subset\mathsf D\cap K_n\), \(k\ge1\), with dense union and set
\[
 C_{n,k}
 =\left\{f\in\C:\min_{1\le i\le q}\max_{t\in G_{n,k}}|f_i(t)|
                  >\frac1{2n}\right\}.
\]
Under \eqref{45} at all anchors, condition \eqref{46} is equivalent to
\[
 \lim_{k\to\infty}\limsup_{u\to\infty}
 \mu_u(O_n\setminus C_{n,k})=0,\qquad n\ge1.
\]
Indeed, \(C_{n,k}\) is a finite union of anchor events at level
\(1/(2n)\). Scaling \eqref{45} as in \eqref{118}, disjointifying
this union and using \eqref{43} imply
\(\lim_{u\to\infty}\mu_u(O_n\cap C_{n,k})=\nu(O_n\cap C_{n,k})\).
Decomposing \(O_n\) into this intersection and its remainder proves
\eqref{46}. Conversely, \eqref{45} and \eqref{46} give convergence
of \(\mu_u(O_n)\) as \(u\to\infty\), so subtraction yields
\(\lim_{u\to\infty}\mu_u(O_n\setminus C_{n,k})=\nu(O_n\setminus C_{n,k})\)
for fixed \(n,k\).
The latter decreases to zero as \(k\to\infty\), since
\(O_n\subset\bigcup_{k=1}^\infty C_{n,k}\).

We show next that the compact-window condition \eqref{46} cannot be omitted in
continuous time.
\begin{example} 
\label{227}
 Let \(\mathsf T=\mathbb R\), let \(R\) be the Pareto
vector in \eqref{22}, let \(V\) be a standard Pareto variable of
index \(\alpha_1\), and let \(U\) be uniform on \([0,1]\), with
\(R,V,U\) mutually independent. Put
\[
 P(t)=V(1-V|t-U|)_+,\qquad
 X_1(t)=\max\{R_1,P(t)\},\qquad
 X_i(t)=R_i\quad(2\le i\le q).
\]
Use the power tail rates \eqref{37}, and let \(\nu=\nu_Z\) with
\(Z_i(t)=1\) for every \(1\le i\le q\) and \(t\in\mathbb R\), the tensor-product Pareto measure
on positive constant paths. For each fixed \(h\in\mathbb R\) and \(u\ge1\)
\[
 \Pp\{P(h)>u\}
 \le 2\E\{V^{-1};V>u\}
 =\frac{2\alpha_1}{\alpha_1+1}u^{-\alpha_1-1}.
\]
Fix an anchor tuple \(\boldsymbol h\), and write
\(B_u=\{R_i>u_i,\ 1\le i\le q\}\).
The event \(B_u\) is contained in
\(\{u^{-1}\od X\in A_{\boldsymbol h}\}\), and independence yields
\[
 \chi_\alpha(u)
 \Pp\{u^{-1}\od X\in A_{\boldsymbol h},\ B_u^c\}
 \le u_1^{\alpha_1}\Pp\{P(h_1)>u_1\}
 \longrightarrow0, \qquad u\to \infty.
\]
Conditional on \(B_u\), the vector \(u^{-1}\od R\) has the law
of \(R\), while the supremum over \(\mathbb R\) of the difference
between the first components of \(u^{-1}\od X\) and
\(u^{-1}\od R\) is at most \(V/u_1\), which tends to zero
in conditional probability as \(u\to\infty\). Thus \eqref{45} holds at every
anchor, including convergence of the conditional whole-path laws.

Nevertheless, \(K_n=[-n,n]\) contains \(U\) and hence for \(u_i\ge n\), \(1\le i\le q\),
\[
 \mu_u(O_n)
 =2n^{\sum_{i=1}^q\alpha_i}
  -n^{\sum_{i=1}^q\alpha_i+\alpha_1}u_1^{-\alpha_1}
 \longrightarrow
 2n^{\sum_{i=1}^q\alpha_i}
 =2\nu(O_n), \qquad u\to\infty.
\]
Hence \eqref{46} fails. Since the anchor restrictions determine
any tail-measure limit, \(X\) admits no such limit at these
tail rates.
\end{example}

Next, for each \(n\ge1\) set  \[
 B_n=\{f:\rho_{n,i}(f)>1,\ 1\le i\le q\},\qquad
 c_n=\nu(B_n),\qquad
 (S_nf)_i=\frac{f_i}{\rho_{n,i}(f)},
\]
where \(S_n\) is defined when all denominators are positive.

\begin{proposition} 
\label{prop:local-polar}
For each \(n\ge1\) with \(c_n>0\), define
\(Q_n=c_n^{-1}(S_n)_\#(\1_{B_n}\nu)\). 
Then \(X\in\RV_\Pi(\alpha,\nu;b)\) if and only if for every \(n\ge1\)
\begin{equation}
\lim_{u\to\infty} a(u)\Pp\{\rho_{n,i}(X)>u_i,\ 1\le i\le q\}= c_n,
 \label{eq:window-polar-mass}
\end{equation}
and whenever \(c_n>0\)
\begin{equation}
 \Law\left(S_nX\ \middle|\ \rho_{n,i}(X)>u_i,\ 1\le i\le q\right)
 \Longrightarrow Q_n\quad\text{on }\C, \qquad u\to \infty.
 \label{eq:window-polar-angle}
\end{equation}
For such \(n\), these conditions also imply
\begin{equation}
 \Law\left(\left((\rho_{n,i}(X)/u_i)_{i=1}^q,S_nX\right)
 \ \middle|\ \rho_{n,i}(X)>u_i,\ 1\le i\le q\right)
 \Longrightarrow\Law(R)\otimes Q_n,\qquad u\to\infty.
 \label{eq:window-polar-joint}
\end{equation}
where \(R\) is the Pareto vector in \eqref{22}.
\end{proposition}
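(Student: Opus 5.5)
The plan is to treat the window events \(B_n\) exactly as the anchor events \(A_{\boldsymbol h}\) are treated in Theorem~\ref{44}, with the compact suprema \(\rho_{n,i}\) in place of the evaluations \(f_i(h_i)\). Since \(B_n\) and \(B_n^{x}:=\{f:\rho_{n,i}(f)>x_i,\ 1\le i\le q\}=x\od B_n\) are themselves product windows, the pair \eqref{eq:window-polar-mass}--\eqref{eq:window-polar-angle} should be equivalent to \(\1_{B_n}\mu_u\Longrightarrow\1_{B_n}\nu\) as finite measures on \(\C\), for every \(n\ge1\). I would therefore prove:

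\begin{itemize}
\item[(a)] \(X\in\RV_\Pi(\alpha,\nu;b)\) holds if and only if \(\1_{B_n}\mu_u\Longrightarrow\1_{B_n}\nu\) for all \(n\);
\item[(b)] the latter is equivalent to the mass and angle conditions;
\item[(c)] the joint statement \eqref{eq:window-polar-joint} follows.
\end{itemize}

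For (a), forward direction: a bounded continuous \(F\) times \(\1_{B_n}\) is not continuous. However, \(\nu(\partial B_n)=0\) by \eqref{25}, and \(B_n\in\mathcal B_\Pi\). So the standard portmanteau argument for \(v_\Pi\)-convergence applies. Sandwich \(\1_{B_n}\) between functions of \(C_\Pi(\C)\) built from \(\rho_{n,i}\), namely products of piecewise linear cutoffs \(\phi_\delta(\rho_{n,i}(f))\) vanishing below \(1\) or \(1-\delta\), and let \(\delta\downarrow0\). This gives \(\int F\1_{B_n}\,d\mu_u\to\int F\1_{B_n}\,d\nu\) for bounded continuous \(F\) (as in Lemma~\ref{133} for anchors).

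Converse direction: by scaling \(u\mapsto x\od u\) as in \eqref{118}, using \(b_i\in\RV_{\alpha_i}\) and \eqref{12c}, the convergence on \(B_n\) transfers to \(B_n^x\) for every \(x\in\G\). Any \(F\in C_\Pi(\C)\) vanishes outside some \(U_{\boldsymbol n,\varepsilon}\subset B_N^{x}\) with \(N=\max_i n_i\), \(x=\varepsilon\). Since \(F\) is bounded continuous, \(\int F\,d\mu_u=\int F\1_{B_N^x}\,d\mu_u\to\int F\,d\nu\). This step requires no extra compact-window bound, which is the point of using suprema rather than anchors; it avoids the issue of Example~\ref{227}.

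For (b), when \(c_n=0\) the window convergence reduces to \(\mu_u(B_n)\to0\), which is \eqref{eq:window-polar-mass}. When \(c_n>0\), dividing by total masses shows that window convergence is equivalent to \eqref{eq:window-polar-mass} together with
\[
\Law(u^{-1}\od X\mid \rho_{n,i}(X)>u_i\ \forall i)\Longrightarrow c_n^{-1}\1_{B_n}\nu=:P_n .
\]
The main obstacle is to pass between this conditional law of \(u^{-1}\od X\) and that of \(S_nX\), since the angular data alone loses the radii.

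Here I would establish the polar decomposition of \(P_n\). Under \(P_n\), the pair \(\bigl((\rho_{n,i})_i,S_n\bigr)\) has law \(\Law(R)\otimes Q_n\). To see this, substitute \(s_i=r_i\rho_{n,i}(Z)\) in \eqref{14}, exactly as in the derivation of \eqref{19}--\eqref{20}, with \(w(f)=\prod_i\rho_{n,i}(f)^{\alpha_i}\). This yields
\[
\nu(F\1_{B_n})=\E\Bigl\{w(Z)\int_{(1,\infty)^q}F(s\od S_nZ)\,\vartheta_\alpha(ds)\Bigr\}.
\]

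Given this decomposition, the equivalence in (b) goes as follows. The map \(f\mapsto((\rho_{n,i}(f))_i,S_nf)\) is a homeomorphism of \(\{\rho_{n,i}>0\ \forall i\}\) onto \(\G\times\{g:\rho_{n,i}(g)=1\}\), with inverse \((r,g)\mapsto r\od g\), and \(S_n(u^{-1}\od X)=S_nX\).

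\begin{itemize}
\item Convergence of \(u^{-1}\od X\) gives \eqref{eq:window-polar-joint}, hence \eqref{eq:window-polar-angle}, by the continuous mapping theorem.
\item Conversely, assume \eqref{eq:window-polar-angle}. The radial part converges: \(\Pp\{\rho_{n,i}(X)>x_iu_i\ \forall i\mid\cdot\}\to\prod_i x_i^{-\alpha_i}\) for \(x\ge\boldsymbol1\), which follows from \eqref{eq:window-polar-mass} applied at the scaled thresholds \(x\od u\) via regular variation of the \(b_i\). This gives tightness of the joint law.
\end{itemize}

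The delicate part is identifying the joint limit as the product. To do this, condition on \(\{\rho_{n,i}(X)>x_iu_i\ \forall i\}\), which is the same type of event at thresholds \(x\od u\to\infty\). Applying \eqref{eq:window-polar-angle} along the thresholds \(x\od u\) gives limit \(Q_n\) for every \(x\ge\boldsymbol1\). Rectangles \(\{r>x\}\) form a \(\pi\)-system of continuity sets, so the joint limit is \(\Law(R)\otimes Q_n\). Mapping back by the homeomorphism yields \(P_n\), and in passing establishes \eqref{eq:window-polar-joint}, which is (c).
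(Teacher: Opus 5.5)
Your proposal is correct and follows essentially the same route as the paper. The shared steps are the radial substitution in \eqref{14} exhibiting $\Law(R)\otimes Q_n$ under $c_n^{-1}\1_{B_n}\nu$, and continuous mapping for necessity. For sufficiency, both arguments use radial tightness from the mass limit at $x\od u$, identify subsequential joint limits through the angular limit at $x\od u$, and apply the continuous map $(r,s)\mapsto r\od s$ to obtain $\1_{B_n}\mu_u\Longrightarrow\1_{B_n}\nu$. The only difference is cosmetic and occurs in the final step: the paper rescales to $O_n=(1/n)\od B_n$ and invokes Lemma~\ref{133}, whereas you rescale to $B_N^{x}$ and test $C_\Pi(\C)$ functions directly, which amounts to the same argument.
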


The angular laws \(Q_n\) describe whole paths normalised on \(K_n\),
not merely their restrictions to that window. All limits are cofinal
in the full threshold vector. This is a local multiradial counterpart
of the scalar polar criterion in
\cite[Proposition~3.1]{segers2017polar}: independent scaling coordinates
replace the single radius. In continuous time no separate condition
\eqref{46} is needed here, since window masses are already controlled by
\eqref{eq:window-polar-mass}; whole-path angular convergence still
includes conditional tightness. The proof is in Section~\ref{101}.

The local-tail-field formulation is \eqref{47}--\eqref{48}.
Under \eqref{47}, the same argument with the gauges
\(|f_i(h_i)|\) shows that \eqref{48} may equivalently be
replaced by
\begin{equation}
 \Law\left(N_{\boldsymbol h}X\ \middle|\
 |X_i(h_i)|>u_i,\ 1\le i\le q\right)
 \Longrightarrow\Law(\Theta^{[\boldsymbol h]}), \qquad u\to \infty.
 \label{eq:anchored-polar-angle}
\end{equation}
Thus Theorem~\ref{44} gives both the local-tail-field and
anchored spectral formulations. For \(q=1\) and \(\mathsf T=\mathbb Z\),
these are the two equivalent criteria in
\cite[Lemma~3.5(ii)--(iii)]{DombryHashorvaSoulier2018};
see also \cite[Proposition~2.7]{DombryHashorvaSoulier2018}
for the Pareto--spectral factorisation.
The spectral-process criterion in
\cite[Theorem~5.1]{segers2017polar} treats $q=1$ in the stationary setting.   

For an additive index group $\mathsf T $ write
\(\ell=(\ell_2,\ldots,\ell_q)\in\mathsf D^{q-1}\) and put
\begin{equation}
 A_{\ell,x}:=A_{(0,\ell),x},
 \qquad A_\ell:=A_{(0,\ell)},
 \label{49}
\end{equation}
using the anchor events from \eqref{15} and \eqref{42}.

\begin{corollary}
\label{50}
Suppose that \(X\) is strictly stationary and that \(\mathsf T\) is an
additive group. Then \(X\in\RV_\Pi(\alpha,\nu;b)\) if and only if
\(\nu\) is shift-invariant, the convergence
\begin{equation}
 \1_{A_\ell}\mu_u\Longrightarrow\1_{A_\ell}\nu
 \quad\text{as finite measures on }\C,
 \qquad \ell\in\mathsf D^{q-1}
 \label{51}
\end{equation}
holds as \(u\to\infty\), and the compact-window mass condition \eqref{46} holds when
\(\mathsf T=\mathbb R^m\).
\end{corollary}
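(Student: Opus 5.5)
We reduce Corollary~\ref{50} to Theorem~\ref{44} by transporting anchor tuples along shifts. For the forward direction, suppose \(X\in\RV_\Pi(\alpha,\nu;b)\). Strict stationarity gives \(\mu_u\circ(B^h)^{-1}=\mu_u\) for every \(h\in\mathsf T\), since \(B^h\) commutes with componentwise scaling. Because \(F\in C_\Pi(\C)\) if and only if \(F\circ B^h\in C_\Pi(\C)\), passing to the limit yields \(\nu(F\circ B^h)=\nu(F)\) on \(C_\Pi(\C)\). These functionals determine a boundedly finite measure on \(\C_\Pi^\circ\) (approximate indicators of closed bounded sets, e.g.\ via the \(O_n\)), so \(\nu\) is shift-invariant. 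Conditions \eqref{51} and \eqref{46} are then special cases of \eqref{45} and \eqref{46} from Theorem~\ref{44}, since \((0,\ell)\in\mathsf D^q\) as \(0\in\mathsf D\).

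For the converse, fix \(\boldsymbol h=(h_1,\ldots,h_q)\in\mathsf D^q\) and set \(\ell=(h_2-h_1,\ldots,h_q-h_1)\in\mathsf D^{q-1}\); this uses that \(\mathsf D\) (\(\mathsf T\) or \(\mathbb Q^m\)) is a group. With \((B^{h}f)(t)=f(t-h)\) we have
\[
f\in A_{\boldsymbol h}\iff B^{-h_1}f\in A_\ell,
\qquad\text{equivalently}\qquad
A_{\boldsymbol h}=(B^{-h_1})^{-1}A_\ell .
\]
Hence \(\1_{A_{\boldsymbol h}}\mu_u=(B^{h_1})_\#\bigl(\1_{A_\ell}\,(B^{-h_1})_\#\mu_u\bigr)=(B^{h_1})_\#(\1_{A_\ell}\mu_u)\) by stationarity. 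The same identity holds for \(\nu\) by shift-invariance. Since \(B^{h_1}\) is a homeomorphism of \(\C\), weak convergence of finite measures is preserved under pushforward, so \eqref{51} gives \(\1_{A_{\boldsymbol h}}\mu_u\Longrightarrow\1_{A_{\boldsymbol h}}\nu\) for every \(\boldsymbol h\in\mathsf D^q\). This is \eqref{45}. Together with \eqref{46} in the case \(\mathsf T=\mathbb R^m\), Theorem~\ref{44} yields \(X\in\RV_\Pi(\alpha,\nu;b)\). In the discrete case \eqref{46} is not needed, by the last sentence of Theorem~\ref{44}.

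The only step needing care is the passage from invariance of the limit functionals to \eqref{13} as an identity of Borel measures. For this we use that \(\nu\) and \((B^h)_\#\nu\) are both boundedly finite and agree on \(C_\Pi(\C)\). Since \(B^h\) preserves \(\mathcal B_\Pi\), their restrictions to each \(O_n\) coincide (by the usual uniqueness for finite measures, using \(\overline O_n\subset O_{n+1}\) and Urysohn-type functions in \(C_\Pi(\C)\)). By \eqref{10} they therefore agree on \(\C_\Pi^\circ\), and both vanish on \(D_\Pi\) because \(B^h D_\Pi=D_\Pi\). The direction bookkeeping of shifts (\(B^{h_1}\) versus \(B^{-h_1}\)) is routine and must simply be kept consistent with \((B^hf)(t)=f(t-h)\).
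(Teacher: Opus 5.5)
Your proof is correct and follows the paper's own argument. Necessity comes from Theorem~\ref{44} together with shift invariance of the limit, and sufficiency from the identity \(\1_{A_{\boldsymbol h}}\mu_u=(B^{h_1})_\#(\1_{A_\ell}\mu_u)\) with \(\ell_i=h_i-h_1\) (and its analogue for \(\nu\)), continuity of the shift, and Theorem~\ref{44}; your uniqueness argument on \(C_\Pi(\C)\) giving \eqref{13} spells out a step the paper only asserts in passing.
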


At each positive-mass lag, \eqref{51} may equivalently be checked
through the mass and conditional whole-path limits
\eqref{47}--\eqref{48}, or equivalently through \eqref{47} and
\eqref{eq:anchored-polar-angle}.
At a zero-mass lag it requires only vanishing normalised anchor mass.
For \(q=1\), only the origin anchor remains, as in
\cite[Corollary~2.8]{DombryHashorvaSoulier2018} and
\cite[Theorems~2.3 and~3.2]{Soulier2022}.
For \(q\ge2\), the common-location lag \(\ell=\boldsymbol0\)
is generally insufficient. Shift invariance also gives, for
\(\boldsymbol h\in\mathsf T^q\), \(a\in\mathsf T\), and every
positive-mass anchor
\begin{equation}
 p_{\boldsymbol h+a}=p_{\boldsymbol h},\qquad
 \bigl(Y^{[\boldsymbol h+a]},\Theta^{[\boldsymbol h+a]}\bigr)
 \stackrel d=
 \bigl(B^aY^{[\boldsymbol h]},B^a\Theta^{[\boldsymbol h]}\bigr),
 \label{52}
\end{equation}
where \(\boldsymbol h+a=(h_1+a,\ldots,h_q+a)\).
The proof is in Section~\ref{101}.

\begin{example} 
\label{231}
Fix \(\alpha>0\). Let \((\eta_{i,t})_{i=1,2,\,t\in\mathbb Z}\) be mutually independent
standard \(\alpha\)-Pareto variables and set
\[
 X_1(t)=\eta_{1,t},\qquad X_2(t)=\eta_{1,t-1}+\eta_{2,t}, \quad t\in \mathbb Z.
\]
The process $X$ is strictly stationary. At each common time its coordinates
are independent, and the second has tail constant two, hence 
\[
 \lim_{u\to\infty}u_1^\alpha u_2^\alpha
 \Pp\{X_1(0)>u_1x,\ X_2(0)>u_2y\}
 =2x^{-\alpha}y^{-\alpha},\qquad x,y>0.
\]
At lag one, however, a single innovation drives both exceedances, namely 
\[
 v^{2\alpha}\Pp\{X_1(0)>v,\ X_2(1)>v\}
 \ge v^\alpha\longrightarrow\infty,\qquad v\to\infty.
\]
Hence MrRV fails under \(b_i(v)=v^\alpha\), \(i=1,2\).
One-time vector convergence therefore cannot replace the
conditional whole-path and relative-anchor checks above.
\end{example}

\subsection{Homogeneous mappings}

The componentwise mapping principle below applies to all index sets in
Section~\ref{6} without shift invariance. For the scalar principle see
\cite[Proposition~2.1.12 and Theorem~B.1.21]{KulikSoulier2020} and
\cite[Lemma~6.1 and Theorem~A.2]{BladtHashorvaShevchenko2022}; for applications see
\cite{dyszewski2020homogeneous}. The measure mapping principle for nets
is given in \cite[Lemma~3.1.3 and Remark~3.1.4]{BasrakMilincevicMolchanov2025}.

Let \(X\in\RV_\Pi(\alpha,\nu;b)\) and let \(H:\C\to\mathbb R^q\)
be Borel measurable and continuous \(\nu\)-almost everywhere with
\[
 H(r\od f)=r\od H(f),\qquad r\in\G,\ f\in\C.
\]
For \(1\le i\le q\), equivariance forces \(H_i(f)=0\) whenever \(f_i\equiv0\), by
varying only the \(i\)-th scaling coordinate. Thus the set \(A_H\)
below is automatically contained in \(\C_\Pi^\circ\); its uniform
separation from \(D_\Pi\) is the additional locality requirement.
We assume the locality and positivity conditions
\begin{equation}
 A_H=\{f\in\C:|H_i(f)|>1,\ 1\leq i\leq q\}\in\mathcal B_\Pi,
 \qquad p_H=\nu(A_H)>0.
 \label{102}
\end{equation}
Locality and bounded finiteness of \(\nu\) imply \(p_H<\infty\).

On \(\mathbb R^q\), the product boundedness consists of Borel sets contained in
\[
\{x\in\mathbb R^q:\min_{1\le i\le q}|x_i|>\varepsilon\}
\]
for some \(\varepsilon>0\). The corresponding test class consists of bounded continuous functions on \(\mathbb R^q\) that vanish whenever \(\min_{1\le i\le q}|x_i|\le\varepsilon\) for some \(\varepsilon>0\).

\begin{lemma}
\label{104} 
Under the assumptions on \(X\) and \(H\) we have on
\((\mathbb R\setminus\{0\})^q\)
\begin{equation}
 H(X)\in\RV_\Pi(\alpha,\nu_H;b),\qquad
 \nu_H=(H_\#\nu)|_{(\mathbb R\setminus\{0\})^q}.
 \label{105}
\end{equation}
If \(H\) takes values in \([0,\infty)^q\), then
\(p_H=\E\{\chi_\alpha(H(Z))\}\) and for every \(x\in\G\)
\begin{equation}
 \lim_{u\to\infty}a(u)
 \Pp\{H_i(X)>u_i x_i,\ 1\leq i\leq q\}
 =p_H\prod_{i=1}^q x_i^{-\alpha_i}.
 \label{108}
\end{equation}
\end{lemma}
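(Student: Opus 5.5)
The plan is a continuous-mapping argument for the product localisation, followed by a radial computation for the explicit formula.

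\emph{Step 1 (test functions pull back).} Fix a test function \(g\) on \((\mathbb R\setminus\{0\})^q\): it is bounded, continuous, and vanishes whenever \(\min_i|x_i|\le\varepsilon\). Set \(F=g\circ H\). We have \(H(u^{-1}\od X)=u^{-1}\od H(X)\) by equivariance, so
\[
 a(u)\E\{g(u^{-1}\od H(X))\}=\mu_u(F).
\]
This holds with \(\mu_u=\mu_u^{X,b}\). Next we check where \(F\) lives. The function \(F\) vanishes outside \(\varepsilon^{-1}\od A_H\) with \(\varepsilon^{-1}\od A_H=\{|H_i|>\varepsilon,\ \forall i\}\). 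By \eqref{102} and \eqref{131}, the set \(\varepsilon^{-1}\od A_H\) lies in \(\mathcal B_\Pi\), so it is contained in some product window \(U_{\boldsymbol n,\delta}\) by \eqref{192}. However, \(F\) need not be continuous, and it need not vanish on the complement of a window in the sense of \eqref{29}. So \(F\notin C_\Pi(\C)\) in general.

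\emph{Step 2 (extend the convergence beyond \(C_\Pi(\C)\)).} Here I would invoke the standard portmanteau/continuous-mapping extension of \(v_\Pi\) convergence. Restricted to a bounded set \(B\) with \(\nu(\partial B)=0\), the finite measures converge weakly: \(\1_B\mu_u\Longrightarrow\1_B\nu\). For \(B\) I choose the set \(O_n\) that contains \(U_{\boldsymbol n,\delta}\), or a slightly larger one; \(\nu(\partial O_n)=0\) holds by \eqref{43}. Obtaining \(\1_{O_n}\mu_u\Longrightarrow\1_{O_n}\nu\) from convergence on \(C_\Pi(\C)\) is routine. One sandwiches \(\1_{O_n}\) between Lipschitz cutoffs in \(\tau_n\), namely functions of \(\tau_n\) and hence members of \(C_\Pi(\C)\), and uses \(\overline O_n\subset O_{n+1}\). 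Then \(F\) is bounded, vanishes off \(O_n\), and is continuous \(\nu\)-a.e. The extended continuous mapping theorem for finite measures then gives \(\mu_u(F)\to\nu(F)=\nu_H(g)\). I expect this to be the main obstacle. One must verify that the discontinuity set of \(F\), contained in \(\mathrm{Disc}(H)\), is \(\nu\)-null; this is the hypothesis. One must also handle the fact that the cutoff on \(O_n\) interacts with \(F\). Since \(F\1_{O_n}=F\), only \(\nu(\partial O_n)=0\) matters. Finally, \(\nu_H\) is nonzero because \(p_H>0\) and \(\nu_H(\{\min|x_i|>1\})=p_H\). It is boundedly finite because preimages of product-bounded sets are bounded via \eqref{102} and scaling. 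This proves \eqref{105}.

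\emph{Step 3 (the explicit formula).} Suppose \(H\ge0\). By \eqref{14} and equivariance,
\[
 p_H=\E\int_\G\1\{r_iH_i(Z)>1\ \forall i\}\,\vartheta_\alpha(dr)=\E\prod_i H_i(Z)^{\alpha_i}=\E\{\chi_\alpha(H(Z))\}.
\]
For \eqref{108}, homogeneity \eqref{12c} gives \(\nu(x\od A_H)=\chi_\alpha(x)^{-1}p_H\). The set \(\{x\od y: y_i>1\}\) is a \(\nu_H\)-continuity set, since \(\nu_H\{y_i=c\}=0\) by the same radial integration as in \eqref{25}. Therefore the portmanteau step of Step 2, applied in \(\mathbb R^q\), gives \eqref{108}.
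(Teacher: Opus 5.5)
Your proposal is correct and follows the paper's argument. You pull back the target test via equivariance and the locality condition \eqref{102}, and then localise to a finite measure. The paper does this by multiplying by a cutoff \(\psi_{\boldsymbol n,\varepsilon}\) from \eqref{132} equal to one on the support of \(F\); you restrict to \(O_n\), which is item~3 of Lemma~\ref{133}. You then apply the mapping theorem for finite measures with a \(\nu\)-a.e.\ continuous integrand, and obtain \(p_H\) and \eqref{108} by radial substitution together with \(\nu\{H_i=c\}=0\).

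There are two harmless slips. First, \(\{f:|H_i(f)|>\varepsilon,\ 1\le i\le q\}\) equals \(\varepsilon\od A_H\), not \(\varepsilon^{-1}\od A_H\). Second, \(F=g\circ H\) does vanish outside a product window, so continuity is the only obstruction to \(F\in C_\Pi(\C)\).
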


Lemma~\ref{104} concerns a fixed map. Its conclusion cannot in general
be extended to an independent random homogeneous map by conditioning
and averaging, even when the random multiplier has moments of every
positive order.
\begin{example}
\label{117}

Let \(R_1,R_2\) be independent standard Pareto variables of index one
and let \(S\) be Pareto of index \(1/2\). Independently select
\(X=(R_1,R_2)\) or \(X=(S,1)\) with equal probabilities and regard
these vectors as constant paths. If \(\nu_0\) denotes the tensor-product
Pareto tail measure then \(X\in\RV_\Pi((1,1),\nu_0/2)\) because the
second branch eventually vanishes on every product-local test.

Independently take \(W=1+E\), where \(E\) is standard exponential,
and put \(\widetilde X=(X_1,WX_2)\). Although \(W\) has finite
moments of all positive orders, for \(u_1,u_2\ge1\)
\[
 u_1u_2\Pp\{\widetilde X_1>u_1,\widetilde X_2>u_2\}
 =1-\tfrac12e^{1-u_2}
   +\tfrac12u_1^{1/2}u_2e^{1-u_2}.
\]
Indeed, \(u\Pp\{WR_2>u\}=\E\{\min(W,u)\}=2-e^{1-u}\).
The display tends to one as \(u_1=u_2\to\infty\), whereas its
last term is \(ne^{n+1}/2\) at \((u_1,u_2)=(e^{4n},n)\) and
diverges as \(n\to\infty\). Thus multiplication destroys convergence
to a boundedly finite target with the
original power rates. The initially invisible second branch can become
visible at sufficiently unbalanced thresholds. Thus an independent
unbounded multiplier with finite moments of all positive orders need
not preserve MrRV at the original tail rates.
\end{example}

\subsection{Stationary realisation}
\label{225}

Proposition~\ref{38} supplies a Pareto realisation, which need not be
stationary. We begin with finite index sets, where the polar criterion
is global and stationary realisation on finite groups is immediate.

\begin{example}   
\label{ex:finite-polar}
If \(\mathsf T\) is finite, take \(K_n=\mathsf T\) for every \(n\ge1\).
Write \(M_i(f)=\max_{t\in\mathsf T}|f_i(t)|\) and
\((Sf)_i=f_i/M_i(f)\). Proposition~\ref{prop:local-polar} then
requires only one mass limit and one angular limit as \(u\to\infty\), with
\[
 c=\nu\{f:M_i(f)>1,\ 1\le i\le q\}\in(0,\infty),\qquad
 Q=c^{-1}S_\#\bigl(\1_{\{M_i>1,\,1\le i\le q\}}\nu\bigr).
\]
The law \(Q\) is supported by  
\(\{s:M_i(s)=1,\ 1\le i\le q\}\), and for every nonnegative
Borel \(F\),
\begin{equation}
 \nu(F)=c\int_\C\!\int_\G F(r\od s)\,\vartheta_\alpha(dr)\,Q(ds)
 \label{eq:finite-polar-factorisation}
\end{equation}
implying that  \(c,Q\) are uniquely determined by \(\nu\) for this
normalisation. Positivity of \(c\) follows because scaled copies of
\(\{M_i>1,\,1\le i\le q\}\) cover the carrier; the factorisation
follows by radial substitution in \eqref{14}.

If \(\mathsf T\) is a finite abelian group, the row maxima are
shift invariant and \(S\) commutes with shifts. Consequently,
\(\nu\) is shift-invariant if and only if \(Q\) is shift-invariant;
for a cyclic group it suffices to check its generator.
For strictly stationary \(X\), the conditional angular laws in the
criterion are already shift-invariant, hence so is their limit as \(u\to\infty\).
Alternatively, Corollary~\ref{50} uses only
\(|\mathsf T|^{q-1}\) relative-anchor checks.

For such a group and shift-invariant $\nu$, a stationary realisation
is obtained directly from Proposition~\ref{38}. Let $X_0$ be its
Pareto realisation and let $H$ be independent and uniform on
$\mathsf T$. Then $X=B^HX_0$ is strictly stationary and has tail
measure
\[
 \frac1{|\mathsf T|}\sum_{h\in\mathsf T}(B^h)_\#\nu=\nu.
\]
On infinite index sets, global suprema and the corresponding
exceedance mass need not be finite, so this global normalisation
is not automatic.
\end{example}

For the general case let $\mathsf T$ be a countable
discrete abelian group or $\mathbb R^m$ for a fixed $m\in\mathbb N$,
with counting or $m$-dimensional Lebesgue measure $\lambda$,
respectively. Fix a shift-invariant tail measure
$\nu$ in the sense of Definition~\ref{11}.
For $q\ge2$, the one-scale construction of
\cite[Theorem~3.7]{DombryHashorvaSoulier2018} does not apply directly:
product-window finiteness need not control one-component exceedances,
and different particles may supply different rows.
 
\begin{theorem}
\label{53}
With the power normalisers $b_i(u)=u^{\alpha_i}$, there exists
a strictly stationary $\C$-valued random field $X$ such that
\begin{equation}
 X\in\RV_\Pi(\alpha,\nu).
 \label{54}
\end{equation}
If $\nu$ is carried by nonnegative paths, $X$ can be chosen
nonnegative.
\end{theorem}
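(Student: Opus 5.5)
We construct $X$ as a stationary sum of randomly located and randomly scaled particles. Concretely, we realise $X$ as a Poisson-type superposition that is then truncated to become a genuine random field. We start from the Dombry--Hashorva--Soulier idea adapted to the product action.

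\emph{Step 1: a shift-invariant Poisson process whose intensity is $\nu$.} Since $\nu$ is $\sigma$-finite on $\C_\Pi^\circ$ (via \eqref{10}), let $N=\sum_k\delta_{\Phi_k}$ be a Poisson point process on $\C_\Pi^\circ$ with intensity $\nu$. Shift invariance \eqref{13} makes $N$ stationary under $B^h$. We cannot simply sum the points: on an infinite index set a single component of infinitely many points may be visible in a window, because \eqref{12b} controls only simultaneous visibility of all rows.

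\emph{Step 2: a stationary bounded-intensity construction with tunable multipliers.} To produce finite paths we use an auxiliary independent family. We take i.i.d.\ standard Pareto vectors and place copies of a representer at a stationary random set of locations with finite rate, so that the resulting field is locally finite. We then form a mixture
\[
X=\max\text{ or sum over }k\text{ of } R^{(k)}\od B^{T_k}\widetilde Z^{(k)},
\]
where $\widetilde Z$ is a representer of $\nu$ chosen via a shift-invariant normalisation. We obtain the normalisation by dividing by a positive stationary weight, $\widetilde Z=Z/\bigl(\int_{\mathsf T}\omega(B^{-t}Z)\,\lambda(dt)\bigr)^{1/\alpha}$, as in the scalar mass-transport trick. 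This is the natural route, but the product action requires one weight per row. We therefore use $\omega_i(f)=\min(1,|f_i(0)|)^{\alpha_i}$-type functionals made integrable via the compact moment \eqref{18}, combined with a random time $T$ uniform against $\lambda$ through a Poisson location process of unit intensity. With these weights, summing particles at Poisson locations yields a field whose $\mu_u$ has first-order contribution $\nu$ by the Campbell formula and shift invariance. Nonnegativity is preserved if we use $\max$ over rows applied to a nonnegative representer, taken via Proposition~\ref{38} in the nonnegative cone.

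\emph{Step 3: verification of \eqref{54}.} We use Theorem~\ref{44}, or more conveniently Proposition~\ref{prop:local-polar}. The single-particle contribution equals $\nu$ exactly on the $u$-scaled product windows, as in the total-variation statement of Proposition~\ref{38}. We must show that multi-particle contributions, where different rows are supplied by different particles, are negligible at order $\chi_\alpha(u)^{-1}$ uniformly in the relative rates of the $u_i$. For two independent particles supplying disjoint row sets $I$ and $I^c$, the probability factorises into terms bounded by $\prod_{i\in I}u_i^{-\alpha_i}\cdot\prod_{i\in I^c}u_i^{-\alpha_i}$ times a further factor. That extra factor must vanish, and it comes from each particle being a Pareto-tail object only on its joint layer: a single row being large alone must be of larger order.

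This is the main obstacle. Single-row marginals of particles are not controlled by \eqref{12b}, and Example~\ref{117} shows that unbalanced thresholds can expose such branches. I would try first to choose the particle scaling $R$ with independent coordinates and the representer with all rows bounded on windows, for example by normalising $Z$ so that $\rho_{n,i}(Z)$ is controlled via the stationary weights. The marginal tail of row $i$ then has exactly index $\alpha_i$ with a finite constant, and cross terms gain a factor $u_i^{-\alpha_i}\cdot o(1)$ in every row. Finally, compact-window tightness \eqref{46} follows because only finitely many Poisson locations meet $K_n$ in expectation.
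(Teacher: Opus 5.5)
There is a genuine gap at exactly the obstacle you flag, and the remedy you sketch cannot work. In any superposition of \emph{independent} particles placed with a fixed positive intensity, two distinct particles supplying different rows in the same window contribute at the leading order $\chi_\alpha(u)^{-1}$. With independent Pareto radial coordinates, $u_i^{\alpha_i}\Pp\{R_i\rho_{n,i}(\widetilde Z)>u_i\}=\E\{\min(\rho_{n,i}(\widetilde Z)^{\alpha_i},u_i^{\alpha_i})\}\uparrow\E\{\rho_{n,i}(\widetilde Z)^{\alpha_i}\}>0$. So a pair of particles yields a term of order $u_1^{-\alpha_1}u_2^{-\alpha_2}$ with a positive constant. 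An exact index $\alpha_i$ with a finite constant, which is what you aim for, is precisely the case in which your claimed extra factor $o(1)$ does \emph{not} appear. Nor can a cleverer particle help: anything whose single-particle term is MrRV has row marginals of order at least $u_i^{-\alpha_i}$, as one sees by freezing the other thresholds at a large level.

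Concretely, on $\mathsf T=\mathbb Z$ with $q=2$, take $\nu(F)=\sum_{s\in\mathbb Z}\int_\G F(r\od(\1_{\{s\}},\1_{\{s\}}))\,\vartheta_\alpha(dr)$, for which $\widetilde Z=Z$. One particle $R^{(t)}\od(\1_{\{t\}},\1_{\{t\}})$ per site gives $X(t)=R^{(t)}$ and $\chi_\alpha(u)\Pp\{\rho_{n,1}(X)>u_1,\rho_{n,2}(X)>u_2\}\to|K_n|^2$, whereas $\nu\{\rho_{n,1}>1,\rho_{n,2}>1\}=|K_n|$. Poisson numbers of particles per site give $|K_n|^2+|K_n|$. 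Changing the intensity only rescales the ratio of cross terms to single-particle terms. In the scalar case the cross term needs two large particles and is of order $u^{-2\alpha}$, which is why the one-scale construction goes through there but not here.

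Two further steps fail for general shift-invariant $\nu$. First, your normalisation by $\int_{\mathsf T}\omega(B^{-t}Z)\,\lambda(dt)$ needs finite positive row integrals, which \eqref{18} does not provide. The tensor-product Pareto measure on positive constant paths (the target of Example~\ref{227}) is a shift-invariant tail measure with $\int_{\mathsf T}\min(1,|Z_i(t)|)^{\alpha_i}\lambda(dt)=\infty$. Second, for such paths an infinite sum of unclipped particles with radii $R_i\ge1$ diverges. Your argument for \eqref{46} (finitely many locations meeting $K_n$) is also unavailable when particles are not compactly supported.

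The paper avoids independent particles altogether. It draws a \emph{single} mark $\zeta=(r,Z)$ from a finite mark measure. It clips $r\od Z$ to a compact window $J_{n(r)}$ whose size grows with the radial tier, and weights that tier by $1/\ell_{n(r)}$. Averaging the location over $\lambda$ then reproduces $\nu$ through shift invariance, with no normalising functional. It replicates this one path on a stationary centre set $\mathcal A_{k(\zeta)}$ of intensity $1/d_{k(\zeta)}$, sparse enough that at most one copy meets any window $K_j$ with $j\le k(\zeta)$. Tilting the mark law by $d_{k(z)}$ cancels the intensity in \eqref{sr:reward}. Hence $\Phi(u^{-1}\od X)$ is exactly a sum of single-copy terms, no cross terms exist, and whole-path convergence on $O_N$ follows from the compact restrictions by a tightness argument.
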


The proof is given in Appendix~\ref{226}.

\section{Applications and examples}
\label{61}

We apply the mapping principle to finite observation sets and compute
the full path-tail measure of a stationary finite moving average.

\subsection{Finite observation sets}

\begin{example}
\label{62}
Let \(\mathsf T=\{t_1,\ldots,t_k\}\), let \(X\) be nonnegative
with \(X\in\RV_\Pi(\alpha,\nu_Z;b)\), and choose a nonnegative
representer \(Z\). Write \(f_i^+(t)=\max\{f_i(t),0\}\).
For fixed \(c_{ij}>0\), \(1\le i\le q\), \(1\le j\le k\), define either the \emph{row maxima} or
\emph{weighted row aggregates} by
\[
 H_i(f)=\max_{1\le j\le k}f_i^+(t_j)
 \quad\text{or}\quad
 H_i(f)=\sum_{j=1}^k c_{ij}f_i^+(t_j),\qquad 1\le i\le q.
\]
These maps satisfy Lemma~\ref{104}, so for every \(x\in(0,\infty)^q\),
\begin{equation}
 \lim_{u\to\infty} a(u)
 \Pp\{H_i(X)>u_i x_i,\ 1\le i\le q\}
 =\E\left\{\prod_{i=1}^q H_i(Z)^{\alpha_i}\right\}
       \prod_{i=1}^q x_i^{-\alpha_i}.
 \label{63}
\end{equation}
The constant is finite and positive by \eqref{18} and
\(Z\in\C_\Pi^\circ\).

Row maxima need not correspond to a simultaneous exceedance.
For \(q=k=2\), let \(X_i(t_j)=R_i a_{ij}\), with independent
standard \(\alpha_i\)-Pareto radii, and compare the matrices
\[
 \begin{pmatrix}1&0\\0&1\end{pmatrix}
 \qquad\text{and}\qquad
 \begin{pmatrix}1&0\\1&0\end{pmatrix}.
\]
Both have row-maximum tail constant one; the tail constants for
\(\{\exists j\in\{1,2\}:X_i(t_j)>u_i,\ 1\le i\le2\}\), under
\(\chi_\alpha(u)\) as \(u\to\infty\), are respectively zero and one. Distinct row
anchors therefore matter already for two observations.
\end{example}

The next example discusses the simultaneous ruin at finitely many observation times. 

\begin{example}
Retain the setting of Example~\ref{62} and  for \(x\in\G\), define
the surplus \(U_i^{(u)}(t)=u_i x_i-X_i(t)\).
For every nonempty \(J\subset\{1,\ldots,k\}\) set next 
\[
 H_{J,i}(f)=\min_{j\in J}f_i^+(t_j),\qquad
 p_J=\E\left\{\prod_{i=1}^q H_{J,i}(Z)^{\alpha_i}\right\}.
\]
Each \(H_J\) is continuous, componentwise homogeneous and satisfies
the locality condition in \eqref{102}. Lemma~\ref{104} therefore
implies  whenever \(p_J>0\)
\[
 \lim_{u\to\infty}a(u)
 \Pp\{X_i(t_j)>u_i x_i,\ 1\le i\le q,\ j\in J\}
 =p_J\prod_{i=1}^q x_i^{-\alpha_i}.
\]
The same limit holds when \(p_J=0\): apply the lemma to
\(H_J+\varepsilon H^\vee\), where
\(H_i^\vee(f)=\max_{1\le j\le k}f_i^+(t_j)\), and let
\(\varepsilon\downarrow0\), using \eqref{18} and dominated convergence.

Inclusion--exclusion now yields the simultaneous ruin limit
\[
 \lim_{u\to\infty}a(u)
 \Pp\{\exists j\in\{1,\ldots,k\}:
       U_i^{(u)}(t_j)<0,\ 1\le i\le q\}
 =
 \left(
 \sum_{\varnothing\ne J\subset\{1,\ldots,k\}}
       (-1)^{|J|+1}p_J
 \right)
 \prod_{i=1}^q x_i^{-\alpha_i}.
\]
All lines are ruined at the same observation time.
\end{example}

\subsection{Finite moving averages}

A shared volatility path couples separately scaled innovation rows.
Their scalar path-tail measures can remain unchanged while their
relative-anchor tails differ. Related tail-process and finite-moving-average frameworks
are \cite{BasrakSegers2009,DombryHashorvaSoulier2018} and
\cite[Section~3.3 and Theorem~3.4]{ResnickRoy2014}, respectively.
For mixed volatility moments in joint heavy tails see
\cite[Section~5.2]{KulikSoulier2015}; for a positive-cone Breiman theorem see
\cite[Theorem~2.3]{JanssenDrees2016}.

Fix \(q=2\), \(\alpha>0\) and an integer \(\ell_0\ge2\).
Let \(V\) be a positive strictly stationary process on \(\mathbb Z\) satisfying
\begin{equation}
 \E\{V(0)^\alpha\}=1,\qquad \E\{V(0)^{2\alpha}\}<\infty.
 \label{86}
\end{equation}
Independently of $V$ let
$(\eta_{i,s})_{1\leq i\leq2;\,s\in\mathbb Z}$ be mutually independent
standard $\alpha$-Pareto variables and put
\begin{equation}
 X_i(t)=\sum_{j=0}^{\ell_0-1}V(t-j)\eta_{i,t-j},\qquad
 \varphi_s(t)=\1_{\{s,\ldots,s+\ell_0-1\}}(t),
 \label{87}
\end{equation}
\[w_\delta=\E\{V(0)^\alpha V(\delta)^\alpha\}, \qquad \delta\in\mathbb Z.\]

\begin{example}
\label{88}
The process $X$ is strictly stationary with
$X\in\RV_\Pi((\alpha,\alpha),\nu_{\mathrm{FV}};b)$ for $b_i(u)=u^\alpha$.
Its path-tail measure is
\begin{equation}
 \nu_{\mathrm{FV}}(F)
 =\sum_{\delta\in\mathbb Z}w_\delta
   \sum_{s\in\mathbb Z}\int_{(0,\infty)^2}
 F(r_1\varphi_s,r_2\varphi_{s+\delta})
 \,\vartheta_{(\alpha,\alpha)}(dr),\qquad F\ge0,
 \label{89}
\end{equation}
where $F$ is Borel measurable.

Only finitely many source pairs contribute to a fixed product window (recall definition \eqref{9}).
The scalar path-tail measure of either row, obtained from its separate
one-row regular-variation limit, is
\begin{equation}
 \nu_i^{\mathrm{sc}}(F)=\sum_{s\in\mathbb Z}\int_0^\infty
     F(r\varphi_s)\alpha r^{-\alpha-1}\,dr,\qquad 1\leq i\leq2,
 \label{90}
\end{equation}
so it does not depend on the temporal law of $V$. It is not a marginal
of \(\nu_{\mathrm{FV}}\): projecting the latter onto one row gives infinite
mass to every nonempty row-supremum window, because the unused radial
coordinate has infinite mass. The general anchor masses depend on the
lagged mixed volatility moments through
\begin{equation}
 p_{(0,\ell)}=\sum_{d=-(\ell_0-1)}^{\ell_0-1}(\ell_0-|d|)w_{\ell+d},
 \qquad\ell\in\mathbb Z.
 \label{91}
\end{equation}
These limits allow arbitrary relative growth of the two thresholds.
\end{example}

\begin{remark}
\label{rem:finite-filter-common-time}
The common-time calculation has a direct extension to any fixed
\(q\ge2\). Replace the two innovation rows in \eqref{87} by \(q\)
mutually independent rows, still independent of \(V\), and assume
\[
 \E\{V(0)^{q\alpha}\}<\infty.
\]
Put \(M_q(0)=0\) and define
\[
 M_q(m)
 =\E\left\{\left(\sum_{j=0}^{m-1}V(j)^\alpha\right)^q\right\}, \qquad m\in \mathbb N.
\]
For \(T\in\mathbb N\), let \(H_q(T)\) denote the limiting tail mass of
at least one simultaneous exceedance of all \(q\) rows over
\(\{0,\ldots,T-1\}\), i.e., 
\[
 H_q(T)
 =\lim_{u\to\infty}
   \left(\prod_{i=1}^q u_i^\alpha\right)
   \Pp\left\{\exists\,0\le t<T:
        X_i(t)>u_i,\ 1\le i\le q\right\}.
\]
Fix \(T\), put \(J=\{0,\ldots,T-1\}\) and
\(S_J=\bigcup_{j=0}^{\ell_0-1}(J-j)\). Conditional independence
and \eqref{156} give, for every \(u\in(0,\infty)^q\),
\[
 \begin{aligned}
 &\left(\prod_{i=1}^q u_i^\alpha\right)
 \Pp\{\exists t\in J:\ X_i(t)>u_i,\ 1\le i\le q\mid V\}\\
 &\quad\le\prod_{i=1}^q
 \left[u_i^\alpha\Pp\{\max_{t\in J}X_i(t)>u_i\mid V\}\right]\\
 &\quad\le\ell_0^{q\alpha}
       \left(\sum_{s\in S_J}V(s)^\alpha\right)^q=:D_J,
 \end{aligned}
\]
and
\[
 \E\{D_J\}
 \le\ell_0^{q\alpha}|S_J|^{q-1}
       \sum_{s\in S_J}\E\{V(s)^{q\alpha}\}
 =\ell_0^{q\alpha}|S_J|^q\E\{V(0)^{q\alpha}\}<\infty.
\]
The conditional scalar limits in Appendix~\ref{230} tensorise
across the \(q\) rows; the simultaneous-exceedance event has null
boundary under the limit by the radial densities. Dominated
convergence with \(D_J\) therefore gives \(H_q(T)\) along every
componentwise diverging threshold sequence, for each fixed \(T\).
Then
\[
 p_q:=H_q(1)=M_q(\ell_0),\qquad
 h_q=M_q(\ell_0)-M_q(\ell_0-1),
\]
and
\[
 H_q(T)=p_q+(T-1)h_q,
\]
so
\[
 \lim_{T\to\infty}\frac{H_q(T)}{T\,H_q(1)}
 =\theta_{q,\ell_0}
 :=\frac{h_q}{p_q}
 =1-\frac{M_q(\ell_0-1)}{M_q(\ell_0)}.
\]
For $r\in\mathbb Z_{\ge0}$, put
\[
 c_q(r)=
 \sum_{\substack{\boldsymbol d\in\{0,\ldots,r\}^q\\
                  \min_{1\le i\le q}d_i=0,\ \max_{1\le i\le q}d_i=r}}
 \E\left\{\prod_{i=1}^q V(d_i)^\alpha\right\}.
\]
Expanding the power and grouping source tuples by their minimum
and range gives, by stationarity,
\[
 \begin{aligned}
 M_q(m)
 &=\sum_{\boldsymbol s\in\{0,\ldots,m-1\}^q}
      \E\left\{\prod_{i=1}^qV(s_i)^\alpha\right\}
   =\sum_{r=0}^{m-1}(m-r)c_q(r),\\
 M_q(m)-M_q(m-1)&=\sum_{r=0}^{m-1}c_q(r),\qquad m\ge1.
 \end{aligned}
\]
For source times \(s_i=s+d_i\), with \(\min_{1\le i\le q}d_i=0\) and
\(\max_{1\le i\le q}d_i=r\), the common pulse support is
\[
 \bigcap_{i=1}^q\{s_i,\ldots,s_i+\ell_0-1\}
 =[s+r,s+\ell_0-1]\cap\mathbb Z.
\]
It meets \(J\) precisely when
\[
 0\le r<\ell_0,\qquad 1-\ell_0\le s\le T-1-r.
\]
Each such source tuple contributes
\(\E\{\prod_{i=1}^qV(s_i)^\alpha\}\), since the unit-threshold
radial mass is
\(\prod_{i=1}^q\int_1^\infty\alpha r_i^{-\alpha-1}\,dr_i=1\).
Consequently,
\[
 \begin{aligned}
 H_q(T)
 &=\sum_{r=0}^{\ell_0-1}
     \sum_{s=1-\ell_0}^{T-1-r}c_q(r)=\sum_{r=0}^{\ell_0-1}(T+\ell_0-r-1)c_q(r)\\
 &=M_q(\ell_0)
   +(T-1)\bigl[M_q(\ell_0)-M_q(\ell_0-1)\bigr].
 \end{aligned}
\]
For $q=2$, $p_q$ equals $p_{(0,0)}$ in \eqref{91}.

Thus the dependence on the codomain dimension enters through
higher-order mixed moments of \(V^\alpha\), while \(T\) records growth
of the observation window.
\end{remark}

\begin{example}
\label{97}
Take any $\ell_0\ge4$ and $0<c<1/7$, and let $(\xi_t)$ be iid Rademacher
variables independent of the innovations. Compare
\begin{equation}
 \begin{aligned}
 V_A(t)^\alpha&=1+c(2\xi_{t-1}+\xi_{t-2}+4\xi_{t-3}),\\
 V_B(t)^\alpha&=1+c(\xi_t+4\xi_{t-2}+2\xi_{t-3}).
 \end{aligned}
 \label{98}
\end{equation}
Both volatility processes are positive, bounded and strictly stationary,
and satisfy \eqref{86}. The coefficient permutations preserve their
one-time law, and \eqref{90} gives the same scalar path-tail measure
for every row. Nevertheless, \eqref{91} yields
\begin{equation}
 \begin{gathered}
 p_{(0,0)}^A=p_{(0,0)}^B=\ell_0^2+(49\ell_0-44)c^2,\\
 p_{(0,2)}^A-p_{(0,2)}^B
 =\begin{cases}
 2c^2,&\ell_0=4,\\
 4c^2,&\ell_0\ge5.
 \end{cases}
 \end{gathered}
 \label{99}
\end{equation}
Moreover, Remark~\ref{rem:finite-filter-common-time} gives
\[
 \begin{aligned}
 H_2^A(T)=H_2^B(T)
 &=\ell_0^2+(49\ell_0-44)c^2 +(T-1)(2\ell_0-1+49c^2),\qquad T\in\mathbb N.
 \end{aligned}
\]
Thus even the scalar path-tail measures together with the
simultaneous-exceedance tail masses on every consecutive finite window
do not determine the relative-anchor tails.
\end{example}
The proofs of \eqref{89}--\eqref{91} and \eqref{99}
are presented in Appendix~\ref{230}.
\section{Proofs}
\label{101}

\begin{proof}[Proof of Proposition~\ref{38}]
Use the countable dense subset \(\mathsf D\) from \eqref{184} and enumerate
\(\mathsf D^q\) as \((\boldsymbol h_j)_{j\in\mathcal J}\), where
\(\mathcal J=\{1,\ldots,N\}\) with \(N=|\mathsf D^q|\) when
\(\mathsf D^q\) is finite and \(\mathcal J=\mathbb N\) otherwise. By continuity of the paths
\(\C_\Pi^\circ=\bigcup_{j\in\mathcal J}E_{\boldsymbol h_j}\) and we disjointify
this cover into the Borel sets
\[
 V_j=E_{\boldsymbol h_j}\setminus
       \bigcup_{\substack{k\in\mathcal J\\k<j}}E_{\boldsymbol h_k}.
\]
Each \(V_j\) is invariant under componentwise scaling. For \(j\in\mathcal J\)
and \(1\le i\le q\), put on \(V_j\)
\[
 a_{j,i}(f)=|f_i(h_{j,i})|,\qquad
 s_j(f)=a_j(f)^{-1}\od f,\qquad
 \mathcal S_j=\{f\in V_j:a_j(f)=\boldsymbol1\}.
\]
The map \(f\mapsto(a_j(f),s_j(f))\) is a Borel isomorphism
from \(V_j\) onto \(\G\times \mathcal S_j\) with inverse
\((r,s)\mapsto r\od s\) and we define the finite measure
\begin{gather*}
 \sigma_j(C)=\nu\{f\in V_j:a_{j,i}(f)>1\ (1\leq i\leq q),\
                              s_j(f)\in C\},\\
 C\in\mathcal B(\mathcal S_j),
 \qquad m_j=\sigma_j(\mathcal S_j).
\end{gather*}
Finiteness follows because the set defining \(m_j\) is contained
in a product window \eqref{9}, whose mass is finite by \eqref{12b}.
By scale invariance of \(V_j\) and
\(s_j(r\od f)=s_j(f)\) multihomogeneity implies
\[
 \nu\{f\in V_j:a_{j,i}(f)>x_i\ (1\leq i\leq q),\ s_j(f)\in C\}
 =\chi_\alpha(x)^{-1}\sigma_j(C),\qquad x\in\G.
\]
Upper radial rectangles times Borel subsets of \(\mathcal S_j\) determine
the finite measures on \((1/n,\infty)^q\times \mathcal S_j\) so letting
\(n\to\infty\) identifies the pushforward of \(\nu|_{V_j}\) as
\(\vartheta_\alpha\otimes\sigma_j\). For every Borel map \(F\geq0\)
we therefore have
\begin{equation}
 \nu(F)=\sum_{j\in\mathcal J}\int_{\mathcal S_j}\int_\G
          F(r\od s)\,\vartheta_\alpha(dr)\,\sigma_j(ds).
 \label{130}
\end{equation}
This also shows that sectors with \(m_j=0\) are \(\nu\)-null.

Since \(\nu\ne0\) the set \(J_+=\{j\in\mathcal J:m_j>0\}\) is nonempty.
Choose positive probabilities \((\pi_j)_{j\in J_+}\) and let \(J\)
have this distribution with conditional law \(\sigma_j/m_j\) for
\(S=(S_1,\ldots,S_q)\) on \(\{J=j\}\). Set
\[
 Z_1=(m_J/\pi_J)^{1/\alpha_1}S_1,\qquad
 Z_i=S_i, \quad 2\leq i\leq q.
\]
This defines a random element of \(\C_\Pi^\circ\) and changing
the first radial variable in \eqref{14}
multiplies its radial integral by \(m_j/\pi_j\) on \(\{J=j\}\).
In the light of \eqref{130} we obtain
\(\nu_Z=\nu\) while direct radial integration on each product window in \eqref{9} yields
\[
 \E\left\{\prod_{i=1}^q\rho_{n_i,i}(Z)^{\alpha_i}\right\}
 =\nu(U_{\boldsymbol n,\boldsymbol1})<\infty,
\]
as required without any finiteness assumption on \(\sum_{j\in\mathcal J}m_j\).
The construction preserves every scaling-invariant Borel carrier
because it only applies componentwise positive rescalings to the
original shapes.

For the realisation we change variables in the Pareto densities to obtain
\[
 \chi_\alpha(u)\Pp\{u^{-1}\od X\in A\}
 =\E\left\{\int_{\{r_i>1/u_i,\ 1\leq i\leq q\}}
       \1_{\{r\od Z\in A\}}\,\vartheta_\alpha(dr)\right\}
\]
for every Borel \(A\subset\C_\Pi^\circ\). These measures are
dominated by \(\nu_Z\) and increase to it as \(u\to\infty\).
On a finite-mass product window the total variation distance of the
restrictions is the difference of their masses so monotone convergence
along diagonal thresholds and domination by every sufficiently large
threshold vector prove the full-net conclusion.

\end{proof}

\begin{proof}[Proof of Proposition~\ref{prop:local-polar}]
Fix \(n\in\mathbb N\) and write \(g_i=\rho_{n,i}\), \(1\le i\le q\), \(B=B_n\), \(S=S_n\),
\(c=c_n\). On \(E=\{f:g_i(f)>0,\ 1\le i\le q\}\), radial
substitution in \eqref{14}, using
\(g_i(r\od f)=r_i g_i(f)\) yields when \(c>0\)
\[
 \nu(F\1_E)=c\int_\C\!\int_\G F(r\od s)\,
                  \vartheta_\alpha(dr)\,Q_n(ds),\qquad F\ge0.
\]
Indeed the angular mixing measure is the image under \(S\) of
\(\prod_{i=1}^q g_i(Z)^{\alpha_i}\,d\Pp\), with total mass \(c\);
restriction to \(B\) identifies its normalisation as \(Q_n\).
In particular the joint radial-angular law under
\(c^{-1}\1_B\nu\) is \(\Law(R)\otimes Q_n\).
If MrRV holds, Lemma~\ref{133} and \eqref{25} give
\(\1_B\mu_u\Longrightarrow\1_B\nu\) as \(u\to\infty\).
The mass limit follows, and continuous mapping on \(E\) gives the
angular and joint limits when \(c>0\).

Conversely suppose the stated mass and angular limits hold.
For \(c>0\) set \[  E_u=\{g_i(X)>u_i,\ 1\le i\le q\}.\]
For every \(x\in[1,\infty)^q\) and bounded continuous \(H\) on
\(\C\), scale invariance of \(S\) and the assumptions at
\(x\od u\) imply as $u\to \infty$
\[
 \E\left\{H(SX)\1_{\{g_i(X)>x_iu_i,\,1\le i\le q\}}
             \mid E_u\right\}
 \longrightarrow
 \left(\prod_{i=1}^q x_i^{-\alpha_i}\right)\int_\C H\,dQ_n.
\]
Here \(\lim_{u\to\infty}a(u)/a(x\od u)=\prod_{i=1}^q x_i^{-\alpha_i}\).
Taking \(H=1\) shows convergence of the radial marginals to Pareto
laws, hence their tightness. The assumed angular convergence gives
joint tightness, and the displayed identities identify every joint
subsequential limit as \(\Law(R)\otimes Q_n\).
Multiplication \((r,s)\mapsto r\od s\) is continuous, so
\(\1_{B_n}\mu_u\Longrightarrow\1_{B_n}\nu\) as \(u\to\infty\).
For \(c_n=0\) this follows directly from the mass limit.
Applying this conclusion at thresholds \(u/n\) and scaling paths
by \(1/n\) gives convergence on \(O_n=(1/n)\od B_n\) as \(u\to\infty\), since
\(\lim_{u\to\infty}a(u)/a(u/n)=n^{\sum_{i=1}^q\alpha_i}\).
Lemma~\ref{133} now proves MrRV.

For the anchored formulation use \(g_i(f)=|f_i(h_i)|\) in the
same argument. Equations~\eqref{20}--\eqref{23} identify the
angular law as \(\Law(\Theta^{[\boldsymbol h]})\).
This proves the equivalence of \eqref{48} and
\eqref{eq:anchored-polar-angle} under \eqref{47};
Theorem~\ref{44} supplies the asserted index-set distinctions.
\end{proof}

\begin{proof}[Proof of Theorem~\ref{44}]
Necessity follows from Lemma~\ref{133} and
\eqref{43}. For sufficiency we first transfer
\eqref{45} to every fixed
\(x\in\G\) by writing \(v=x\od u\) and observing that
as \(u\to\infty\)
\begin{equation}
 \1_{A_{\boldsymbol h,x}}\mu_u
 =\frac{a(u)}{a(x\od u)}
 (f\mapsto x\od f)_\#
 \bigl(\1_{A_{\boldsymbol h}}\mu_{x\od u}\bigr)
 \Longrightarrow\1_{A_{\boldsymbol h,x}}\nu.
 \label{118}
\end{equation}
Here \(\lim_{u\to\infty}a(u)/a(x\od u)=\chi_\alpha(x)^{-1}\) by fixed-level
regular variation of the \(b_i\)'s and multihomogeneity identifies
the limit.

Fix \(n\in\mathbb N\). By density of \(\mathsf D\cap K_n\) and path continuity,
the anchor sets \(A_{\boldsymbol h,(1/n,\ldots,1/n)}\),
\(\boldsymbol h\in(\mathsf D\cap K_n)^q\), form a countable cover
of \(O_n\), finite in the discrete case. Enumerate these sets as
\((A_j)_{j\ge1}\), repeating sets in the finite case, put \(V_m=\bigcup_{j=1}^m A_j\), and disjointify by
\(D_j=A_j\setminus\bigcup_{1\le k<j}A_k\). Then \(V_m\uparrow O_n\)
as \(m\to\infty\), with equality eventually in the discrete case. Since
\(\partial D_j\subset\bigcup_{k=1}^j\partial A_k\), \eqref{43}
gives \(\nu(\partial D_j)=0\). Restricting \eqref{118} for \(A_j\)
to \(D_j\) and summing over \(1\le j\le m\) therefore give
\[
 \1_{V_m}\mu_u\Longrightarrow\1_{V_m}\nu,\qquad u\to\infty.
\]
In the continuous case, \eqref{46} yields
\[
 \limsup_{u\to\infty}\mu_u(O_n\setminus V_m)
 \leq\nu(O_n)-\nu(V_m)\longrightarrow0,
 \qquad m\to\infty;
\]
in the discrete case the remainder is eventually zero.
Approximation by the restrictions to \(V_m\) therefore proves
convergence on \(O_n\) as \(u\to\infty\), so
Lemma~\ref{133} applies. The assertions about zero and positive
anchor masses follow by taking total masses and normalising finite
measures. The extension to arbitrary anchor tuples follows from the
full convergence just proved and restriction to the corresponding
continuity sets.
\end{proof}

\begin{proof}[Proof of Corollary~\ref{50}]
Necessity follows from Theorem~\ref{44} and shift invariance of the
tail measure of a stationary MrRV rf. For the converse, take \(\boldsymbol h\in\mathsf D^q\), put
\(\ell_i=h_i-h_1\ (2\leq i\leq q)\), and use stationarity of \(X\) and shift invariance
of \(\nu\) to obtain
\[
 \1_{A_{\boldsymbol h}}\mu_u
 =(B^{h_1})_\#(\1_{A_\ell}\mu_u),\qquad
 \1_{A_{\boldsymbol h}}\nu
 =(B^{h_1})_\#(\1_{A_\ell}\nu).
\]
Continuity of the shift transfers \eqref{51} to every
absolute-anchor condition \eqref{45}; when required, \eqref{46}
completes the proof. Moreover,
\(B^aA_{\boldsymbol h}=A_{\boldsymbol h+a}\) and
\(N_{\boldsymbol h+a}B^a=B^aN_{\boldsymbol h}\), which prove
\eqref{52}.
\end{proof}

\begin{proof}[Proof of Lemma~\ref{104}]
By equivariance we have for every \(\varepsilon\in\G\)
\[
 H^{-1}\{y:|y_i|>\varepsilon_i,\ 1\leq i\leq q\}
 =\varepsilon\od A_H\in\mathcal B_\Pi.
\]
A bounded continuous target test vanishing near the deleted coordinate
hyperplanes therefore pulls back to a bounded product-local function
whose discontinuity set is \(\nu\)-null. After choosing a cutoff from
\eqref{132} equal to one on its support we apply
Lemma~\ref{133} and the ordinary mapping theorem
for the resulting finite measures to obtain convergence of its integrals as \(u\to\infty\).
This proves \eqref{105} for a boundedly
finite target which is nonzero by \(p_H>0\).

For nonnegative \(H\) we substitute \(s_i=r_iH_i(Z)\) in
\eqref{14}. Outcomes with a zero coordinate of
\(H(Z)\) do not contribute on the target carrier and the substitution
identifies \(\nu_H=p_H\vartheta_\alpha\) and
\(p_H=\E\{\chi_\alpha(H(Z))\}\).
By the same radial integration \(\nu\{H_i=c\}=0\) for \(c>0\) and \(1\le i\le q\)
so continuity of \(H\) \(\nu\)-almost everywhere implies
\(\nu(\partial A_H)=0\). Restriction convergence from
Lemma~\ref{133} yields the mass limit in
\eqref{108} and scaling treats every fixed \(x\).
\end{proof}

\appendix
\section{Technical details}
\label{129}

\subsection{Convergence and localisation}

Use the notation of Sections~\ref{6} and~\ref{224}. The
localisation arguments are standard finite-measure arguments applied
to the product windows; see
\cite[Corollary~B.1.19]{KulikSoulier2020} and
\cite[Remark~4.4(i)]{BladtHashorvaShevchenko2022} for the local
restriction principle. All threshold limits are taken as
\(u\to\infty\) in the sense of \eqref{30}.

\subsubsection*{Product localisers}

The metric \(d_0\) in \eqref{190} induces locally uniform convergence:
convergence in \(d_0\) implies convergence in each compact-window
seminorm, and the converse follows by truncating the summable series.
Its triangle inequality follows from that of the seminorms and
subadditivity of \(s\mapsto1\wedge s\); invariance under addition is
immediate. For any compatible metric invariant under addition,
\[
 \delta_d(f)=\min_{1\le i\le q}
       \inf\{d(h,0):h\in\C,\ h_i=f_i\}.
\]
Indeed, distance to the finite union \(D_\Pi\) is the minimum of the
distances to its zero-component faces, and \(h=f-g\) transforms
\(g_i\equiv0\) into \(h_i=f_i\). For \(d_0\), erasing component
\(i\) while keeping the other components of \(f\) attains the
distance to the \(i\)-th face, giving
\begin{equation}
 \delta_{d_0}(f)
 =\min_{1\le i\le q}\sum_{n=1}^\infty
       2^{-n}\bigl(1\wedge\rho_{n,i}(f)\bigr).
 \label{191}
\end{equation}
Two compatible metrics invariant under addition are uniformly
equivalent: continuity of the identity at zero compares their balls
at zero, and invariance gives the same comparison for every pair of
paths. These comparisons preserve positive distance between sets,
so \eqref{182} is independent of the chosen admissible metric.

A product window is contained in \(\{\tau_N>\varepsilon\}\) for
some \(N\) and \(\varepsilon>0\). Equation~\eqref{191} then gives
the uniform lower bound \(2^{-N}(1\wedge\varepsilon)\) on its
distance from \(D_\Pi\). Conversely, suppose
\(\delta_{d_0}(f)\ge\eta>0\) for all \(f\in A\), and choose
\(N\) with \(2^{-N}<\eta/2\). Monotonicity of the gauges gives,
for every \(1\le i\le q\),
\[
 \eta\le\sum_{n=1}^\infty 2^{-n}(1\wedge\rho_{n,i}(f))
 \le\rho_{N,i}(f)+2^{-N}.
\]
Hence \(\tau_N(f)>\eta/2\) throughout \(A\). This proves the
equivalences \eqref{192}--\eqref{28}; the empty set is immediate.
They also show that \(\mathcal B_\Pi\) is hereditary and closed
under finite unions.

The cover \eqref{10} follows because every nonzero component has
positive supremum on some compact window. By \eqref{28}, every
\(A\in\mathcal B_\Pi\) lies in \(O_n\) for all sufficiently large
\(n\), while continuity and monotonicity of the moduli give
\[
 \overline O_n\subset\{\tau_n\ge1/n\}\subset O_{n+1}.
\]
Finally,
\[
 (\min_{1\le i\le q}r_i)\tau_n(f)\le\tau_n(r\od f)
 \le(\max_{1\le i\le q}r_i)\tau_n(f)
\]
proves \eqref{131}.

\subsubsection*{Localisation and multi-anchor convergence}

Fix a continuous non-decreasing function
\(\varphi:[0,\infty)\to[0,1]\) which is zero on \([0,1/2]\) and one
on \([1,\infty)\) and set
\begin{equation}
 \psi_{\boldsymbol n,\varepsilon}(f)
 =\prod_{i=1}^q
 \varphi\left(\frac{\rho_{n_i,i}(f)}{\varepsilon_i}\right).
 \label{132}
\end{equation}

Let \((\zeta_u)_{u\in\G}\) be a family of boundedly finite measures.

\begin{lemma}
\label{133}
The following conditions, all as \(u\to\infty\), are equivalent:
\begin{enumerate}
\item \(\zeta_u\xrightarrow{v_\Pi}\nu\);
\item for every \(\boldsymbol n\in\mathbb N^q\) and \(\varepsilon\in\G\),
 \begin{equation}
 \psi_{\boldsymbol n,\varepsilon}\zeta_u
 \Longrightarrow\psi_{\boldsymbol n,\varepsilon}\nu
 \quad\text{as finite measures on }\C;
 \label{134}
 \end{equation}
\item \(\1_{O_n}\zeta_u\Longrightarrow\1_{O_n}\nu\) as finite
 measures on \(\C\), for every \(n\in\mathbb N\).
\end{enumerate}
\end{lemma}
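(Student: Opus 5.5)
The plan is to prove the cycle (i)\(\Rightarrow\)(ii)\(\Rightarrow\)(iii)\(\Rightarrow\)(i). The key observation is that every test function in \(C_\Pi(\C)\), every cutoff \(\psi_{\boldsymbol n,\varepsilon}\) and every set \(O_n\) is supported in a product window. Each product window lies inside some \(O_N\), and by \eqref{10} and \eqref{28} the sets \(O_N\) increase and eventually contain every bounded set. Each step therefore reduces to ordinary weak convergence of finite measures restricted to a fixed localiser, plus a continuity-set argument.

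\emph{(i)\(\Rightarrow\)(ii).} Fix \(\boldsymbol n,\varepsilon\) and a bounded continuous \(G\) on \(\C\). The function \(G\psi_{\boldsymbol n,\varepsilon}\) is bounded and continuous, since the gauges \(\rho_{n_i,i}\) are continuous for the locally uniform topology. It also vanishes as soon as \(\rho_{n_i,i}(f)\le\varepsilon_i/2\) for some \(i\), so it belongs to \(C_\Pi(\C)\) with levels \(\varepsilon/2\). Hence \(\int G\psi\,d\zeta_u\to\int G\psi\,d\nu\), which is \eqref{134}.

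\emph{(ii)\(\Rightarrow\)(iii).} Fix \(n\) and take \(\boldsymbol n=(n,\ldots,n)\) and \(\varepsilon=(1/n,\ldots,1/n)\). Then \(\psi_{\boldsymbol n,\varepsilon}\) equals one on \(\overline O_n\subset\{\tau_n\ge1/n\}\), so \(\1_{O_n}\zeta_u=\1_{O_n}(\psi\zeta_u)\). By \eqref{43}, \(\nu(\partial O_n)=0\). The portmanteau theorem for finite measures (restriction of a weakly convergent sequence of finite measures to a continuity set of the limit) then gives \(\1_{O_n}\psi\zeta_u\Longrightarrow\1_{O_n}\psi\nu=\1_{O_n}\nu\). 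The only care needed is that the net is indexed by \(u\to\infty\) in the sense of \eqref{30}. I would handle this by working along an arbitrary sequence \(u^{(k)}\) with \(\min_i u^{(k)}_i\to\infty\), which is how all limits are defined.

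\emph{(iii)\(\Rightarrow\)(i).} Take \(F\in C_\Pi(\C)\) with parameters \(\boldsymbol n,\varepsilon\). Its support lies in \(\{\rho_{n_i,i}>\varepsilon_i\ \forall i\}\subset\{\tau_N>\min_i\varepsilon_i\}\), where \(N=\max_i n_i\) and the gauges are monotone in the window. Choose \(n\ge N\) with \(1/n<\min_i\varepsilon_i\). Then \(F=F\1_{O_n}\), and since \(F\) is bounded continuous on \(\C\), \(\int F\,d\zeta_u=\int F\,d(\1_{O_n}\zeta_u)\to\int F\,d(\1_{O_n}\nu)=\nu(F)\).

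The main obstacle is the restriction step in (ii)\(\Rightarrow\)(iii). It needs \(\nu(\partial O_n)=0\), and it needs \(\psi\) to be identically one on a neighbourhood of the closure of \(O_n\). The first comes from \eqref{25}: \(\partial O_n\subset\bigcup_i\{\rho_{n,i}=1/n\}\) by continuity of \(\tau_n\). The second comes from the choice of \(\varphi\), which equals one on \([1,\infty)\). Everything else is routine bookkeeping over product windows.
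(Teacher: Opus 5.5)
Your proof is correct and follows essentially the paper's argument. The paper uses the same three steps: the cutoff $G\psi_{\boldsymbol n,\varepsilon}\in C_\Pi(\C)$ for (i)$\Rightarrow$(ii), restriction of the finite weak convergence to the $\nu$-continuity set $O_n$ via \eqref{43} for (ii)$\Rightarrow$(iii), and the fact that every product-local test vanishes outside some $O_n$ for (iii)$\Rightarrow$(i). It also proves (ii)$\Rightarrow$(i) directly, which your cycle makes unnecessary.

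One small correction to your closing remark: the restriction step only needs $\psi_{\boldsymbol n,\varepsilon}=1$ on $O_n$. Your claim that $\psi_{\boldsymbol n,\varepsilon}$ equals one on a neighbourhood of $\overline O_n$ is not needed and fails in general. For example, if $\varphi<1$ on $[0,1)$, then scaling a path with $\tau_n=1/n$ by $1-\eta$ produces nearby paths where $\psi_{\boldsymbol n,\varepsilon}<1$.
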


Under these equivalent conditions
\(\1_A\zeta_u\Longrightarrow\1_A\nu\) as \(u\to\infty\), for every
\(A\in\mathcal B_\Pi\) with \(\nu(\partial A)=0\) as shown in
the proof below.

\begin{proof}[Proof of Lemma~\ref{133}]
For bounded continuous \(G\) the function
\(G\psi_{\boldsymbol n,\varepsilon}\) belongs to \(C_\Pi(\C)\)
which proves 1\(\Rightarrow\)2 while
\(F=F\psi_{\boldsymbol n,\varepsilon}\) for every
\(F\in C_\Pi(\C)\) vanishing outside
\(U_{\boldsymbol n,\varepsilon}\) proves the converse.
The same cutoff equals one on any
\(A\subset U_{\boldsymbol n,\varepsilon}\).  Restricting the finite
weak convergence in 2 to the continuity set \(A\) proves the
assertion following the lemma and hence 3 by
\eqref{43}. Finally every product-local test
function vanishes outside some \(O_n\) by
\eqref{28} so 3 implies 1.
\end{proof}

\subsubsection*{Finite-dimensional and equicontinuity checks}

Let \(\eta_u,\eta\) be finite Borel measures on \(\C\) and write
\(\pi_Jf=(f(t))_{t\in J}\) for finite \(J\subset\mathsf D\).
In the continuous case put
\[
 \omega_{s,i}(f,\delta)
 =\sup_{\substack{x,y\in K_s\\|x-y|\leq\delta}}
 |f_i(x)-f_i(y)|.
\]

\begin{lemma}
\label{135}
As \(u\to\infty\), on a discrete index set
\(\eta_u\Longrightarrow\eta\) is equivalent to
\((\pi_J)_\#\eta_u\Longrightarrow(\pi_J)_\#\eta\) for every finite
\(J\subset\mathsf D\), including convergence of total masses.
On \(\mathbb R^m\) the same equivalence holds after adding for every
\(s\in\mathbb N\), \(1\leq i\leq q\) and \(\gamma>0\)
\[
 \lim_{\delta\downarrow0}\limsup_{u\to\infty}
 \eta_u\{f\in\C:\omega_{s,i}(f,\delta)>\gamma\}=0.
\]
\end{lemma}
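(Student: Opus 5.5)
The plan is to reduce the statement to the standard characterisation of weak convergence of finite measures on \(\C\): convergence of total masses, convergence of finite-dimensional distributions, and (in the continuous case) tightness. First we treat total masses. Taking \(J\) to be any nonempty finite set, \((\pi_J)_\#\eta_u\Longrightarrow(\pi_J)_\#\eta\) includes \(\eta_u(\C)\to\eta(\C)\). If \(\eta(\C)=0\) both directions are trivial, since weak convergence to the zero measure is just convergence of masses to zero. Otherwise, for all \(u\) with all thresholds large, we normalise to the probability measures \(\eta_u/\eta_u(\C)\) and \(\eta/\eta(\C)\). This reduces everything to probability laws on \(\C\), with limits taken along arbitrary sequences \(u^{(k)}\) with \(\min_i u^{(k)}_i\to\infty\). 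Since the net limit is defined through all such sequences, it suffices to argue sequentially.

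The necessity direction is immediate in both settings. Each \(\pi_J\) is continuous for the locally uniform topology, so the continuous mapping theorem gives the finite-dimensional convergence. In the continuous case we also need the modulus condition. Weak convergence of a sequence on the Polish space \(\C\) implies tightness by Prokhorov's theorem. On each compact \(K_s\), the Arzelà--Ascoli theorem then gives uniform equicontinuity on compacts outside sets of small mass. Because \(\{\omega_{s,i}(\cdot,\delta)>\gamma\}\) decreases to the empty set as \(\delta\downarrow0\) for every continuous path, this yields \(\lim_{\delta\downarrow0}\limsup\eta_u\{\omega_{s,i}>\gamma\}=0\).

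For sufficiency in the discrete case, \(\C\) is the product space \(\mathbb R^{q\mathsf T}\) with the product topology, and \(\mathsf D=\mathsf T\) is countable. Finite-dimensional cylinders form a convergence-determining class in a countable product of Polish spaces. Concretely, we will verify tightness coordinatewise: each one-point marginal converges, hence is tight, and a product of compacts is compact by Tychonoff's theorem. The limit is then identified by the finite-dimensional laws, which determine a measure on the product \(\sigma\)-field (equal to the Borel one here).

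For sufficiency on \(\mathbb R^m\), the key step, and the main obstacle, is upgrading convergence of finite-dimensional distributions on the countable dense set \(\mathsf D=\mathbb Q^m\) to tightness in \(\C\). We will use the Arzelà--Ascoli criterion on each \(K_s\). Tightness of \(f(t_0)\) at one point \(t_0\in\mathsf D\cap K_1\) follows from the finite-dimensional convergence. The modulus condition as stated only gives \(\limsup_{u}\), so we must also control the finitely many early terms of a sequence. We handle this by noting that each single law on \(\C\) is tight, so \(\omega_{s,i}(f,\delta)\to0\) in probability under each fixed law, and we shrink \(\delta\) accordingly. This gives relative compactness of the normalised sequence on \(\C\). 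Any subsequential limit has the same finite-dimensional laws on \(\mathsf D\) as \(\eta\). These laws determine a Borel measure on \(\C\), because continuous paths are determined by their values on \(\mathsf D\) and the Borel \(\sigma\)-field is generated by evaluations at points of \(\mathsf D\). Hence every subsequential limit equals \(\eta\), and combining with mass convergence gives \(\eta_u\Longrightarrow\eta\).
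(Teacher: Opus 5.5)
Your proposal is correct and follows essentially the same route as the paper: argue along arbitrary threshold sequences, get tightness coordinatewise with Tychonoff in the discrete case, and on \(\mathbb R^m\) get it from Arzel\`a--Ascoli on the boxes \(K_s\) using one-point tightness plus the modulus condition (the paper controls suprema through a finite \(\delta\)-net instead), then identify limits through evaluations on \(\mathsf D\). The only real variation is the necessity of the modulus condition: the paper uses the Portmanteau bound \(\limsup_{u\to\infty}\eta_u\{\omega_{s,i}(f,\delta)>\gamma\}\le\eta\{\omega_{s,i}(f,\delta)\ge\gamma\}\), which is uniform over threshold sequences, whereas your Prokhorov/Arzel\`a--Ascoli choice of \(\delta\) depends on the sequence and needs a small diagonal argument, using monotonicity of \(\omega_{s,i}\) in \(\delta\), to recover the stated net limit.
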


The lemma applies to the finite cutoff measures in
Lemma~\ref{133} and the anchor restrictions in
Theorem~\ref{44}.

\begin{proof}[Proof of Lemma~\ref{135}]
Work along an arbitrary sequence \((u^{(n)})\) with
\(u^{(n)}\to\infty\) as \(n\to\infty\).
In the discrete case coordinate tightness with summable error bounds
implies tightness on the countable product. In the continuous case a
finite \(\delta\)-net \(J\subset\mathsf D\cap K_s\) yields
\[
 \rho_{s,i}(f)\leq\max_{t\in J}|f_i(t)|
 +\omega_{s,i}(f,\delta).
\]
Finite-dimensional convergence and the modulus condition therefore
imply tightness of compact suprema so Arzel\`a--Ascoli on successive
boxes establishes path-space tightness while evaluations on \(\mathsf D\)
identify every subsequential limit. For the converse continuous mapping
establishes the finite-dimensional limits and Portmanteau bounds the limsup in the
modulus condition by
\(\eta\{\omega_{s,i}(f,\delta)\ge\gamma\}\), which tends to zero
as \(\delta\downarrow0\) by continuity of paths and finiteness of \(\eta\).
Since the sequence was arbitrary, the conclusion holds as \(u\to\infty\).
\end{proof}

\subsection{Stationary realisation}
\label{226}

\begin{lemma}
\label{lem:sr-exhaustion}
Every countably infinite discrete additive abelian group $\mathsf T$ admits an increasing
finite exhaustion $(K_n)$, with $0\in K_n$, such that as $n\to \infty$
\[
 \frac{|K_{n+1}|}{|K_n|}\longrightarrow1,\qquad
 \frac{|(h+K_n)\mathbin\triangle K_n|}{|K_n|}\longrightarrow0,
 \quad h\in\mathsf T.
\]
\end{lemma}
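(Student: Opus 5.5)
The plan is to use the classical fact that every countable discrete abelian group is amenable. Such a group therefore admits a Følner sequence of finite sets, and I would upgrade it to an increasing, exhaustive one with slowly growing cardinalities. Enumerate $\mathsf T=\{g_0=0,g_1,g_2,\ldots\}$. Since $\mathsf T$ is abelian, every finitely generated subgroup $\Gamma_k=\langle g_1,\ldots,g_k\rangle$ is isomorphic to $\mathbb Z^{d_k}\times F_k$ with $F_k$ finite. On such a group the boxes $\{-N,\ldots,N\}^{d_k}\times F_k$ (pulled back through the isomorphism) are explicit Følner sets. Their boundary ratio for a fixed generator is $O(1/N)$, and consecutive boxes satisfy $|K_{N+1}|/|K_N|=((2N+3)/(2N+1))^{d_k}\to1$.

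The main construction interleaves these boxes across the growing subgroups. First I would treat the case where $\mathsf T$ is finitely generated, i.e.\ $\mathsf T\cong\mathbb Z^d\times F$ with $d\ge1$ because $\mathsf T$ is infinite. Here the boxes above already satisfy both requirements: they contain $0$, increase, and exhaust $\mathsf T$. For a general $h$ the Følner property follows from the generator case, since $(h+h')+K\mathbin\triangle K\subset (h+(h'+K\mathbin\triangle K))\cup(h+K\mathbin\triangle K)$ by translation invariance of counting measure. For the non-finitely generated case, suppose $\mathsf T$ is an increasing union of finitely generated $\Gamma_k$, with each inclusion of finite or infinite index.

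The obstacle is to pass from $\Gamma_k$ to $\Gamma_{k+1}$ while keeping the ratio $|K_{n+1}|/|K_n|\to1$ and monotonicity. In the infinite-index case a $\Gamma_k$-box is a null-density slice of $\Gamma_{k+1}$, so one cannot simply jump to a $\Gamma_{k+1}$-box. I would fill the gap by a sequence of intermediate sets $K\cup(K+g)\cup\cdots$ added one translate at a time. Instead I would use the following cleaner device. Given a current finite set $K$ that is $\varepsilon_k$-invariant under $g_1,\ldots,g_k$, choose a large Følner set $L\subset\mathsf T$ for $g_1,\ldots,g_{k+1}$ with $K\subset L$ and $|K|/|L|$ tiny. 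Such an $L$ exists by taking a big box in a finitely generated subgroup containing $K$ and $g_{k+1}$, so that it contains $K$. Then enumerate $L\setminus K$ so that the sets are added point by point, or in short blocks in the lexicographic order of the box coordinates.

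The difficulty is that intermediate sets in such a point-by-point growth may fail the Følner property. To control this, I would instead grow through nested boxes $K\cup B_j$, where $B_j$ ranges over boxes in the bigger subgroup of side $j$. Concretely, $B_j=\{-j,\ldots,j\}^{d}\times F$ in coordinates of $\Gamma'=\langle K,g_1,\ldots,g_{k+1}\rangle$, taken for $j$ from $1$ up to the size of $L$. Once $|B_j|\ge|K|/\varepsilon$, the union $K\cup B_j$ is $O(\varepsilon)+O(1/j)$ invariant. Before that threshold, the invariance is controlled by $K$ itself, since $|B_j|\le\varepsilon|K|$ for small $j$; I would use a threshold cut and require $|K|$ large enough at the switching stage. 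In both regimes consecutive cardinality ratios are $1+O(1/j)+O(\varepsilon)$.

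Concatenating the stages with $\varepsilon_k\downarrow0$ gives an increasing sequence containing $0$ that eventually contains each $g_k$, hence exhausts $\mathsf T$. It has cardinality ratios tending to $1$, and for each fixed $h$ (lying in some $\Gamma_k$) its symmetric-difference ratio tends to $0$. The delicate bookkeeping at the switching points between boxes of different subgroups, namely keeping both estimates uniform, is the step I expect to require the most care.
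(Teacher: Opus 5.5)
\textbf{Gap: locally finite groups.} Your finitely generated case is correct. Your interpolation through $K\cup B_j$ can also be completed when $\mathsf T$ has an element of infinite order. Put that element first in the enumeration, so every $\Gamma_k$ is infinite. Then enlarge $K$ inside $\Gamma_k$ until $|K|$ dominates $|F|$ and the coordinate sizes of $g_1,\dots,g_{k+1}$ in the new decomposition. Your two regimes leave out the range $\varepsilon|K|<|B_j|<|K|/\varepsilon$; cover it with the bound $|(h+(A\cup B))\triangle(A\cup B)|\le|(h+A)\triangle A|+|(h+B)\triangle B|$.

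The genuine gap is the locally finite case, for instance $\mathsf T=\bigoplus_{n\ge1}\mathbb Z/2\mathbb Z$ or $\mathsf T=\mathbb Q/\mathbb Z$, where every finitely generated subgroup is finite. There $d=0$ in every decomposition $\Gamma'\cong\mathbb Z^d\times F$, so your boxes satisfy $B_j=F=\Gamma'$ for every $j$. The current set is itself such a box, hence a finite subgroup that cannot be enlarged before switching. Your interpolation therefore collapses to a single jump from $K$ to $\langle K,g_{k+1}\rangle$. This multiplies the cardinality by the index $[\langle K,g_{k+1}\rangle:K]\ge2$ whenever $g_{k+1}\notin K$. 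Since $\mathsf T$ is infinite, this happens infinitely often, and $|K_{n+1}|/|K_n|\to1$ fails. Adding whole translates of $K$ one at a time, which you considered and set aside, fails the same way: the first translate already doubles the size.

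\textbf{Missing idea.} Drop the requirement that the current set be a box in a single subgroup. In the paper's proof the current set is a disjoint union of many translates of Følner blocks. The block $A_m$ has relative translation error at most $1/m$ for the first $m$ enumerated elements; a finite subgroup suffices in the locally finite case. Translates of $A_m$ are added one at a time, disjoint from the current set, which is always possible in an infinite group. Stage $m$ ends only once the accumulated size is at least $m|A_{m+1}|$ and at least $m^2$. Each single addition then changes the cardinality by a factor at most $1+1/(m-1)$. For fixed $h$, applying the subadditivity bound above blockwise gives a symmetric-difference ratio at most $(2|E_r|+2m)/(m-1)^2+1/r$, where $E_r$ is a fixed initial segment. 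This handles all countably infinite abelian groups uniformly and avoids the coordinate-dependent constants in your switching step.
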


\begin{proof}
Enumerate the group and choose finite sets $A_m$ whose relative
translation errors are at most $1/m$ for its first $m$ elements.
Such sets are obtained from large boxes in the finitely generated
abelian subgroup generated by those elements; a finite subgroup
itself suffices when that subgroup is finite.

Start with a finite set containing $0$. At stage $m$, add the $m$th
enumerated element if absent, followed by disjoint translates of
$A_m$. End the stage only after the accumulated size is at least
$m$ times its initial size, at least $m|A_{m+1}|$, and at least $m^2$.
An infinite group always permits another translate disjoint from the
current finite union. Retain every intermediate set in the exhaustion.
Immediately before an addition at stage $m\ge2$, the current set
$K$ and the enlarged set $K'$ satisfy
\[
 |K|\ge\max\{(m-1)|A_m|,(m-1)^2\},\qquad
 0\le\frac{|K'|}{|K|}-1
 \le\frac{\max\{|A_m|,1\}}{|K|}\le\frac1{m-1}
\]
implying $\lim_{n\to \infty}|K_{n+1}|/|K_n|=1$.
Fix $h\in\mathsf T$, choose $r\ge2$ so that $h$ is among the
first $r$ enumerated elements, and let $E_r$ be the set at the end
of stage $r-1$. Every intermediate set $K$ in stage $m\ge r$
is the disjoint union of $E_r$, translates of blocks $A_j$ with
$j\ge r$, and at most $m$ further singletons. Hence we have 
\[
 \frac{|(h+K)\mathbin\triangle K|}{|K|}
 \le\frac{2|E_r|}{|K|}+\frac1r+\frac{2m}{|K|}
 \le\frac{2|E_r|+2m}{(m-1)^2}+\frac1r.
\]
Letting $m\to\infty$ and then $r\to\infty$ proves the second
assertion. Enumeration ensures exhaustion.
\end{proof}

\begin{proof}[Proof of Theorem~\ref{53}]
The finite-group case follows from Example~\ref{ex:finite-polar}.
Suppose henceforth that $\mathsf T$ is infinite. In the discrete
case use the exhaustion of Lemma~\ref{lem:sr-exhaustion}; on
$\mathbb R^m$ use $K_n=[-n,n]^m$. By cofinality these exhaustions may
be used in \eqref{27}. Write
$\pi_n f=f|_{K_n}$, $\ell_n=\lambda(K_n)$ and
$\beta=\sum_{i=1}^q\alpha_i$. Choose a representer $Z$ with $\nu=\nu_Z$
by Proposition~\ref{38}, taking $Z$ nonnegative when appropriate.
 
In the discrete case put $J_n=K_n$ and $\kappa_n=\1_{K_n}$.
On $\mathbb R^m$ put $J_n=[-n-1,n+1]^m$ and choose continuous
$0\le\kappa_n\le1$, equal to one on $K_n$ and zero outside $J_n$.
For the stationary placement below define
\[
 d_n=
 \begin{cases}
 |(K_n-K_n)+(K_n-K_n)|,&\mathsf T\text{ discrete},\\
 [8(n+1)]^m,&\mathsf T=\mathbb R^m.
 \end{cases}
\]
These numbers are positive and nondecreasing. For $1\le j\le n$ define
the finite measure
\[
 d\Lambda_{n,j}
 =\1_{\{s\in K_j-J_n\}}\1_{\{B^s(v\od Z)\in O_j\}}
       \lambda(ds)\,\Pp_Z(dZ)\,\vartheta_\alpha(dv).
\]
Its mass is $\lambda(K_j-J_n)\nu(O_j)$ by shift invariance.
Choose $\delta_n\in(0,1)$ and $v_n^+>1$ so that, simultaneously
for $1\le j\le n$,
\begin{equation}
 \Lambda_{n,j}\{\min_{1\le i\le q} v_i<\delta_n\}\le n^{-3},\qquad
 \Lambda_{n,j}\{\max_{1\le i\le q} v_i>v_n^+\}\le2^{-n}.
 \label{sr:radial-errors}
\end{equation}
Choose $t_1\ge2$ and $t_n$ recursively large enough that
\begin{equation}
 \frac{t_n}{t_{n-1}}>\max\{2,\delta_n^{-1},v_n^+\},\quad n\ge2,
 \qquad \frac{d_n}{\ell_n}t_n^{-\beta}\le2^{-n},\quad n\ge1.
 \label{sr:tiers}
\end{equation}
Let $n(r)=n$ when $t_n\le\min_{1\le i\le q} r_i<t_{n+1}$, and define
the finite mark measure and its associated path by
\begin{align*}
 d\mathsf M(z)&=
 \frac{\1_{\{\min_{1\le i\le q} r_i\ge t_1\}}}{\ell_{n(r)}}
       \vartheta_\alpha(dr)\,\Pp_Z(dZ),\qquad z=(r,Z),\\
 P_z&=\kappa_{n(r)}(r\od Z).
\end{align*}
Then $0<\mathsf M(1)<\infty$ and
\begin{equation}
 \int d_{n(r)}\,\mathsf M(dz)
 \le\sum_{n=1}^\infty\frac{d_n}{\ell_n}t_n^{-\beta}<\infty.
 \label{sr:base-cost}
\end{equation}
For localised tests put
\[
 \lambda_u(\Phi)=\chi_\alpha(u)
 \int\int_{\mathsf T}\Phi(u^{-1}\od B^sP_z)
                      \lambda(ds)\,\mathsf M(dz).
\]
For each fixed $j$, the ratio $\lambda(K_j-J_n)/\ell_n$ is
bounded uniformly in $n$: it is at most $|K_j|$ in the discrete
case and $(j+2)^m$ on $\mathbb R^m$. For a bounded test $\Phi$ supported
on $O_j$, summing over the disjoint radial tiers gives
\[
 \lambda_u(|\Phi|)\le\chi_\alpha(u)\|\Phi\|_\infty
 \left(\sup_{n\ge1}\frac{\lambda(K_j-J_n)}{\ell_n}\right)t_1^{-\beta}
 <\infty.
\]
For integers $1\le N\le j$ and every bounded continuous
$F:C(K_j,\mathbb R^q)\to\mathbb R$, we claim that, as $u\to\infty$
\begin{equation}
 \lambda_u(\Phi)\longrightarrow\nu(\Phi),\qquad
 \Phi=\1_{O_N}(F\circ\pi_j).
 \label{sr:raw-limit}
\end{equation}

Assume $\|F\|_\infty\le1$, so $|\Phi|\le\1_{O_j}$.
Substitute $r=u\od v$ and let
$t_M\le\min_{1\le i\le q} u_i<t_{M+1}$, where $M\to\infty$ as $u\to\infty$,
and assume $M>j+2$.
The selected tier depends on $a=\min_{1\le i\le q} u_iv_i$.
For $1\le n\le M-2$, one has
$\min_{1\le i\le q} v_i<t_{M-1}/t_M<\delta_M$.
Contributing centres lie in $K_j-J_n\subset K_j-J_M$, and
membership of the clipped path in $O_j$ implies membership of
the unclipped path. These tiers therefore contribute at most
$M^{-3}\sum_{n=1}^{M-2}\ell_n^{-1}\le M^{-2}$.
For $n\ge M+2$, choose $i_*\in\{1,\ldots,q\}$ with $u_{i_*}=\min_{1\le i\le q} u_i$. Then
\[
 v_{i_*}\ge t_n/u_{i_*}>t_n/t_{M+1}
 \ge t_n/t_{n-1}>v_n^+,
\]
so the high tiers contribute at most
$\sum_{n=M+2}^\infty\ell_n^{-1}2^{-n}=O(2^{-M})$ as $M\to\infty$.

For $n=M-1,M,M+1$, all cutoffs equal one throughout the translated
observation window when
\[
 s\in I_M:=\bigcap_{t\in K_j}(t-K_{M-1}).
\]
The exhaustion lemma, applied also to $-K_n$, gives in the discrete
case, as $M\to\infty$,
\[
 \frac{\lambda(I_M)}{\ell_M}\to1,\qquad
 \frac{\lambda((K_j-J_{M+1})\setminus I_M)}{\ell_M}\to0,
 \qquad \frac{\ell_{M\pm1}}{\ell_M}\to1.
\]
On $\mathbb R^m$ the same assertions follow directly from
$I_M=[-M+j+1,M-j-1]^m$ and $\ell_M=(2M)^m$.
Replacing the three denominators by
$\ell_M$ has error $o(1)\nu(O_j)$ as $M\to\infty$. Their tier indicators sum
to one except on
$\{a<t_{M-1}\}\cup\{a\ge t_{M+2}\}$.
The first set forces $\min_{1\le i\le q} v_i<\delta_M$; the second forces
$\max_{1\le i\le q} v_i>v_{M+2}^+$. Since $I_M$ is contained in the respective
centre domains, \eqref{sr:radial-errors} bounds the missing terms.
Restoring the full radial integral and using shift invariance yields
as $M\to\infty$
\[
 \frac1{\ell_M}\int_{I_M}
 \E\Bigl\{\int_\G\Phi(B^s(v\od Z))\,\vartheta_\alpha(dv)\Bigr\}\,\lambda(ds)
 =\frac{\lambda(I_M)}{\ell_M}\nu(\Phi)\longrightarrow\nu(\Phi).
\]
The remaining centres contribute at most
\[
 \frac{3\lambda((K_j-J_{M+1})\setminus I_M)}{\ell_{M-1}}
       \nu(O_j)=o(1),\qquad M\to\infty.
\]
This proves \eqref{sr:raw-limit} without restricting $u_i/u_k$, $1\le i,k\le q$.
Shift invariance is applied only after restoring the full radial
integral.

 Each $P_z$ has compact support and hence  its amplitude
\[
 S_z=\max_{1\le i\le q}\sup_{t\in J_{n(r)}}|P_{z,i}(t)|
\]
is finite and measurable. After $\mathsf M$ has been fixed, choose
strictly increasing positive $s_j\uparrow\infty$ as $j\to\infty$ such that
\[
 d_{j+1}\mathsf M\{S_z>s_j\}\le2^{-j},\qquad j\ge1.
\]
Put $L(S)=\min\{j\ge1:S\le s_j\}$ and
$k(z)=\max\{n(r),L(S_z)\}$. Since
\[
 d_{k(z)}\le d_{n(r)}+
       \sum_{j=1}^\infty d_{j+1}\1_{\{S_z>s_j\}},
\]
\eqref{sr:base-cost} implies
\begin{equation}
 0<K:=\int d_{k(z)}\,\mathsf M(dz)<\infty.
 \label{sr:placement-cost}
\end{equation}

We next construct stationary random centre sets $\mathcal A_k$.
In the discrete case let $(U_t)_{t\in\mathsf T}$ be iid uniform
random variables on $(0,1)$, put
$D_k=(K_k-K_k)+(K_k-K_k)$, and set
\[
 \mathcal A_k=\{s\in\mathsf T:U_s<U_{s+d}\text{ for every }
   d\in D_k\setminus\{0\}\}.
\]
Their laws are stationary and
$\Pp\{s\in\mathcal A_k\}=|D_k|^{-1}=d_k^{-1}$.
Since $D_k$ is symmetric, two centres cannot differ by an element
of $D_k$: each would have to have the smaller uniform label.
Consequently, if $1\le n,j\le k$, at most one translate
$s+J_n$, $s\in\mathcal A_k$, can meet $K_j$.

On $\mathbb R^m$, take $V$ uniform on $(0,1)^m$, put
$p_k=8(k+1)$, and set $\mathcal A_k=p_k(V+\mathbb Z^m)$.
A uniform phase makes this set stationary under every translation,
with intensity $p_k^{-m}=d_k^{-1}$. If $1\le n,j\le k$, two
translates of $J_n=[-n-1,n+1]^m$ meeting $K_j=[-j,j]^m$ would have
centres at distance at most $2(n+j+1)<p_k$ in the sup norm,
which is impossible.
In both cases the centre sets satisfy
\begin{equation}
 \E\Bigl\{\sum_{s\in\mathcal A_k}g(s)\Bigr\}
 =\frac1{d_k}\int_{\mathsf T}g(s)\,\lambda(ds),\qquad g\ge0.
 \label{sr:reward}
\end{equation}

Independently of these auxiliary random variables, draw one mark
$\zeta$ with probability law $K^{-1}d_{k(z)}\mathsf M(dz)$.
With $a_*=K^{1/\beta}$, define
\[
 X(t)=a_*\sum_{s\in\mathcal A_{k(\zeta)}}P_\zeta(t-s).
\]
The supports are disjoint and locally finite, so $X$ is a measurable
$\C$-valued random field, continuous when $\mathsf T=\mathbb R^m$.
Conditional on $\zeta$, its law is stationary; hence so is its
unconditional law. Nonnegativity is preserved.
 
Next, fix the test $\Phi$ from \eqref{sr:raw-limit}. Conditional on
$\zeta=z$, if $k(z)\ge j$, at most one path meets $K_j$, and
\[
 \Phi(u^{-1}\od X)
 =\sum_{s\in\mathcal A_{k(z)}}
       \Phi((u/a_*)^{-1}\od B^sP_z).
\]
If $k(z)<j$, then $S_z\le s_{j-1}$, and both sides vanish once
$\min_{1\le i\le q} u_i>Na_*s_{j-1}$; for $j=1$ this case is empty.
Thus the identity holds for every mark at all sufficiently large
thresholds. Integrating the mark law and using \eqref{sr:reward}
cancels the factor $d_{k(z)}$, giving
\begin{equation}
 \chi_\alpha(u)\E\{ \Phi(u^{-1}\od X)\}
 =\frac{a_*^\beta}{K}\lambda_{u/a_*}(\Phi)
 =\lambda_{u/a_*}(\Phi)\longrightarrow\nu(\Phi), \qquad u\to \infty.
 \label{sr:transfer}
\end{equation}
Absolute finiteness follows from the bounds preceding
\eqref{sr:raw-limit}. For each $N$, every compact restriction of
$\1_{O_N}\mu_u^X$ therefore converges weakly to the corresponding
restriction of $\1_{O_N}\nu$ as $u\to\infty$, including total masses.
Fix any sequence $(u^{(k)})_{k\ge1}$ diverging componentwise as $k\to\infty$ and put
$\eta_k=\1_{O_N}\mu_{u^{(k)}}^X$ and $\eta=\1_{O_N}\nu$.
For each $j\ge N$, the convergence
$(\pi_j)_\#\eta_k\Longrightarrow(\pi_j)_\#\eta$ as $k\to\infty$ gives uniform
tightness of these finite measures. Thus, for $\varepsilon>0$,
choose compact sets $C_j\subset C(K_j,\mathbb R^q)$ such that
\[
 \sup_{k\ge1}\eta_k\{f:\pi_jf\notin C_j\}
 \le\varepsilon2^{-(j-N+1)},\qquad j\ge N.
\]
The set $C=\bigcap_{j=N}^\infty\pi_j^{-1}(C_j)$ is compact:
diagonal extraction gives convergent restrictions on every $K_j$;
the limits agree on overlaps and define a path in $\C$.
Moreover,
\[
 \sup_{k\ge1}\eta_k(C^c)
 \le\sum_{j=N}^\infty\sup_{k\ge1}\eta_k\{f:\pi_jf\notin C_j\}
 \le\varepsilon.
\]
Every subsequential weak limit has compact restrictions
$(\pi_j)_\#\eta$, $j\ge N$, which determine $\eta$. Hence
$\eta_k\Longrightarrow\eta$ as $k\to\infty$, and arbitrariness of $(u^{(k)})$ yields
$\1_{O_N}\mu_u^X\Longrightarrow\1_{O_N}\nu$ as $u\to\infty$.
Lemma~\ref{133} completes the proof.
\end{proof}

\subsection{Finite moving-average calculations}
\label{230}

\begin{proof}[Proof of the finite-filter claims]
The construction commutes with shifts and is therefore strictly stationary.
We verify MrRV using the relative anchors \((0,\ell)\), as in
Corollary~\ref{50}.

Condition on a volatility path $v$ with $0<v(s)<\infty$ for every
$s\in\mathbb Z$ as holds almost surely. The rows are then independent.
For a finite set $J\subset\mathbb Z$ put
$S_J=\bigcup_{j=0}^{\ell_0-1}(J-j)$ and
$W_t(v)=\sum_{j=0}^{\ell_0-1}v(t-j)^\alpha$. The Pareto bound
$\Pp\{\eta>z\}\le z^{-\alpha}$ for all $z>0$ implies
\begin{equation}
 u^\alpha\Pp\left\{\max_{t\in J}X_i(t)>yu\mid V=v\right\}
 \le \ell_0^\alpha y^{-\alpha}\sum_{s\in S_J}v(s)^\alpha,
 \qquad u,y>0.
 \label{156}
\end{equation}
The bound is uniform in $u$ and its right-hand side is a finite
function of the observed volatility coordinates.

Fix a nonempty finite $J$, put $d_J=|S_J|$ and
$B_v=\max_{s\in S_J}v(s)$, and write $\E_v$ for expectation
conditional on $V=v$. Let $F$ be a bounded continuous function of
the row vector on $J$, vanishing when its maximum norm is at most
$\varepsilon>0$. For $M>\varepsilon$, choose a continuous cutoff
$F_M$ which agrees with $F$ on the ball of radius $M$ and vanishes
outside the ball of radius $2M$, with $|F_M|\le|F|$. Denote its
modulus of continuity by $\omega_M$.

For fixed $0<\delta<\varepsilon/(2d_JB_v)$, the probability that
two raw innovations \(\eta_{i,s}\), \(s\in S_J\), exceed \(\delta u\)
is at most
$\binom{d_J}{2}(\delta u)^{-2\alpha}$ for all sufficiently large $u$.
If none exceeds $\delta u$, then
$\|u^{-1}X_i|_J\|_\infty\le d_JB_v\delta<\varepsilon/2$,
so all test values vanish. If exactly one does, the other sources
perturb the normalised row vector by at most
$d_JB_v\delta$. A nonzero difference of test values then requires the
large-source vector to have norm at least $\varepsilon/2$; the scaled
probability of this event is at most
$d_J(2B_v/\varepsilon)^\alpha$, independently of $\delta$.
Consequently, we have 
\begin{equation}
 \begin{aligned}
 &\limsup_{u\to\infty}u^\alpha
   \Bigl|\E_v\{F_M(u^{-1}X_i|_J)\} -\sum_{s\in S_J}
   \E\{F_M(u^{-1}v(s)\eta_{i,s}\varphi_s|_J)\}\Bigr|\\
 &\quad\le d_J(2B_v/\varepsilon)^\alpha\omega_M(d_JB_v\delta).
 \end{aligned}
 \label{189}
\end{equation}
Letting $\delta\downarrow0$ removes this error. In each source term,
the scaled Pareto density is
$\1_{\{r>1/u\}}\alpha r^{-\alpha-1}\,dr$. Radial integration and
then removal of the upper cutoff give the conditional scalar tail
measure
\begin{equation}
 \nu_v^{\mathrm{sc}}(F)=\sum_{s\in\mathbb Z}\int_0^\infty
       F(rv(s)\varphi_s)\alpha r^{-\alpha-1}\,dr.
 \label{157}
\end{equation}
The error from replacing $F$ by $F_M$ is $O(M^{-\alpha})$,
uniformly after multiplication by $u^\alpha$, by \eqref{156};
the individual source terms satisfy the same bound by their Pareto
tails. Thus letting $M\to\infty$ proves finite-dimensional convergence as $u\to\infty$.

To obtain whole-path convergence fix $h\in\mathbb Z$ and $a>0$ and
consider the finite measures
\[
 \mu_{u,v}^{h,a}(B)=u^\alpha\Pp\{u^{-1}X_i\in B,\ X_i(h)>au                               \mid V=v\}.
\]
The limiting measure is $\1_{\{f(h)>a\}}\nu_v^{\mathrm{sc}}$ with total
mass $a^{-\alpha}W_h(v)$. Its anchor boundary has zero mass by the
radial density, so the finite-dimensional convergence above also proves
convergence of the restricted finite-dimensional measures and their
masses. Lemma~\ref{135}, applied to these finite measures with
$q=1$, gives weak convergence on the whole scalar path space
as $u\to\infty$.

For a relative anchor $(0,\ell)$ and levels $a_1,a_2>0$ conditional independence
identifies the normalised joint law on the anchor event with
$\mu_{u_1,v}^{0,a_1}\otimes\mu_{u_2,v}^{\ell,a_2}$. Both finite
measures converge whenever $u=(u_1,u_2)\to\infty$ without a restriction on
their relative rates. Their product masses are bounded by
\[
 \ell_0^{2\alpha}(a_1a_2)^{-\alpha}W_0(V)W_\ell(V),
\]
which is integrable because
$\E\{W_h(V)^2\}\le \ell_0^2\E\{V(0)^{2\alpha}\}<\infty$.

For every bounded continuous test function on the product path space,
the absolute value of its integral against the conditional product
measure is bounded by its supremum norm times this integrable bound,
independently of the thresholds. Dominated convergence therefore
permits averaging the conditional weak limits along every
componentwise diverging threshold sequence; uniform convergence
in the volatility path is not required.

In each source pair the substitutions $r_iv(s_i)\mapsto r_i$
and $\delta=s_2-s_1$ yield
\eqref{89} by stationarity. At anchors
$(0,\ell)$ write the source times as $-a$ and $\ell-b$ with
$0\le a,b<\ell_0$. Their relative lag is $\ell+a-b$, so summing these
source pairs proves \eqref{91}.
Only finitely many source pairs meet a given product window and
$w_\delta\le\E\{V(0)^{2\alpha}\}$, so the target is boundedly
finite and nonzero. Its radial densities give multihomogeneity, and
its source pulses lie in \(\C_\Pi^\circ\). Formula~\eqref{89} also
shows shift invariance, so Corollary~\ref{50} proves MrRV\@. Averaging
\eqref{157} also proves
\eqref{90} because $\E\{V(s)^\alpha\}=1$.

For the two models in \eqref{98}, write
$V(t)^\alpha=1+c\sum_{j=0}^3 b_j\xi_{t-j}$, with $b_j=0$ outside
$\{0,1,2,3\}$. Positivity follows from $7c<1$, and boundedness,
stationarity and centring imply \eqref{86}. Independence and centring
also imply
\[
 w_k=1+c^2\sum_{j\in\mathbb Z}b_jb_{j+k},\qquad k\in\mathbb Z.
\]
For $(b_0,b_1,b_2,b_3)=(0,2,1,4)$ and $(1,0,4,2)$, the squared
coefficient sum is $21$ and the lag products at $k=1,2,3$ are,
respectively, $(6,8,0)$ and $(8,4,2)$. They vanish for $|k|\ge4$,
and $w_{-k}=w_k$. Both lag-product triples have sum $14$ and
weighted sum $1\cdot C_1+2\cdot C_2+3\cdot C_3=22$, where
$C_k=\sum_{j\in\mathbb Z}b_jb_{j+k}$. Consequently, for every $m\ge3$
\[
 \begin{aligned}
 M_2^A(m)=M_2^B(m)
 &=m^2+c^2\left(21m+2\sum_{k=1}^3(m-k)C_k\right)\\
 &=m^2+(49m-44)c^2.
 \end{aligned}
\]
Taking $m=\ell_0$ and $m=\ell_0-1$ proves the common-time anchor
identity in \eqref{99} and  by
Remark~\ref{rem:finite-filter-common-time} the stated equality of
$H_2^A(T)$ and $H_2^B(T)$ for every $T\in\mathbb N$.
For the relative lag two, put
$\Delta_k=(w_k^A-w_k^B)/c^2$. The preceding lag products give
$\Delta_{\pm1}=-2$, $\Delta_{\pm2}=4$, $\Delta_{\pm3}=-2$,
and $\Delta_k=0$ otherwise. Thus \eqref{91} yields
\[
 p_{(0,2)}^A-p_{(0,2)}^B
 =c^2\sum_{k=-3}^3(\ell_0-|k-2|)_+\Delta_k,
\]
which equals $2c^2$ for $\ell_0=4$ and $4c^2$ for $\ell_0\ge5$,
as asserted in \eqref{99}.
Finally, the coefficient vectors have the same nonzero entries, so
independence and identical distribution of the Rademacher variables imply  the same one-time volatility law. Formula~\eqref{90} establishes  the
claimed equality of the scalar path-tail measures.
\end{proof}



\bibliographystyle{ieeetr}
\bibliography{EEEA_RV_V55}
\end{document}